\def\ps@pprintTitle{%
	\let\@oddhead\@empty
	\let\@evenhead\@empty
	\def\@oddfoot{\footnotesize\itshape
		{} \hfill\today}%
	\let\@evenfoot\@oddfoot
}
\newtheorem{theor}{Theorem}
\newtheorem{prop}[theor]{Proposition}
\newtheorem{cor}[theor]{Corollary}
\newtheorem{lemma}[theor]{Lemma}
\theoremstyle{definition} 
\newtheorem{defin}{Definition}
\newtheorem{ex}[theor]{Example}
\newtheorem{exs}[theor]{Examples}
\DeclareMathOperator{\Sym}{Sym}
\DeclareMathOperator{\id}{id}
\DeclareMathOperator{\Aut}{Aut}
\DeclareMathOperator{\Ret}{Ret}
\begin{document}

\begin{frontmatter}
	\title{Left non-degenerate set-theoretic solutions of the Yang-Baxter equation and dynamical extensions of q-cycle sets \tnoteref{mytitlenote}}
	\tnotetext[mytitlenote]{This work was partially supported by the Dipartimento di Matematica e Fisica ``Ennio De Giorgi" - Università del Salento. The authors are members of GNSAGA (INdAM).}
	\author[unile]{Marco~CASTELLI}
	\ead{marco.castelli@unisalento.it}
	\author[unile]{Francesco~CATINO\corref{c1}}
	\ead{francesco.catino@unisalento.it}
	\author [unile] {Paola~STEFANELLI}
	\ead{paola.stefanelli@unisalento.it}
		\cortext[c1]{Corresponding author}
	\address[unile]{Dipartimento di Matematica e Fisica ``Ennio De Giorgi"
		\\
		Universit\`{a} del Salento\\
		Via Provinciale Lecce-Arnesano \\
		73100 Lecce (Italy)\\}

\begin{abstract}
A first aim of this paper is to give sufficient conditions on left non-degenerate bijective set-theoretic solutions of the Yang-Baxter equation so that they are non-degenerate. 
In particular, we extend the results on involutive solutions obtained by Rump in \cite{rump2005decomposition} and answer in a positive way to a question posed by 
Ced\'o, Jespers, and Verwimp
\cite[Question 4.2]{cedo2019structure}. 
Moreover, we develop a theory of extensions for left non-degenerate set-theoretic solutions of the Yang-Baxter equation that allows one to construct new families of set-theoretic solutions. 
\end{abstract}
\begin{keyword}
\texttt{q-cycle set\sep cycle set\sep set-theoretic solution\sep Yang-Baxter equation\sep brace\sep skew brace}
\MSC[2020] 16T25\sep 20N02\sep 20E22\sep 81R50
\end{keyword}

\end{frontmatter}

\section{Introduction}
A \emph{set-theoretic solution of the Yang-Baxter equation}, or shortly a \textit{solution}, is a pair $(X,r)$ where $X$ is a non-empty set and $r$ is a map from $X\times X$ into itself such that 
$$
r_1r_2r_1 = r_2r_1r_2 ,
$$
where $r_1:= r\times id_X$ and $r_2:= id_X\times r$.  Let $\lambda_x:X\to X$ and $\rho_y:X\to X$ be maps such that  
$$
r(x,y) = (\lambda_x(y), \rho_y(x))
$$ 
for all $x,y\in X$. A set-theoretic solution of the Yang-Baxter equation $(X, r)$ is said to be a left [\,right\,] non-degenerate if $\lambda_x\in\Sym(X)$ [\,$\rho_x\in\Sym(X)$\,], for every $x\in X$, and non-degenerate if it is left and right non-degenerate.\\ 
Drinfeld's paper \cite{drinfeld1992some} stimulated much interest in this subject. In recent years, after the seminal papers by Gateva-Ivanova and Van den Bergh \cite{gateva1998semigroups} and Etingof, Schedler and Soloviev \cite{etingof1998set} the involutive solutions $r$, i.e., $r^2=id_{X\times X}$, have received a lot of attention.

To study involutive solutions, many theory involving a lot of algebraic structures has developed. Several examples of involutive solutions have obtained by using groups, racks, and quandles \cite{Ga18,gateva2008matched, cedo2010involutive,Ba18, CeJeOk19x}. 
In 2005, Rump introduced \textit{cycle sets}, non-associative algebraic structures that allow one to find involutive left non-degenerate solutions. 
A set $X$ endowed of an operation $\cdot$ is said to be a cycle set if the left multiplication  $\sigma_x:X\longrightarrow X, y\mapsto x\cdotp y$ is invertible, for every $x\in X$, and the relation 
\begin{align*}
(x\cdotp y)\cdotp (x\cdotp z) 
=
(y\cdotp x)\cdotp (y\cdotp z)
\end{align*}
is satisfied, for all $x,y,z\in X$.
By cycle sets several families of involutive solutions were determined and several interesting results were obtained (see, for example, \cite{dehornoy2015set,cacsp2018quasi,cacsp2019,capiru2020,vendramin2016extensions,cacsp2018,bon2019}). 
%
%

In 2000, Lu, Yan, and Zhu \cite{LuYZ00} and Soloviev \cite{So00} started the study
of non-degenerate solutions that are not necessarily involutive.
To obtain new families of bijective non-degenerate solutions, in 2015 Guarnieri and Vendramin \cite{guarnieri2017skew} introduced the algebraic structure of \textit{skew brace}, a generalisation of the braces introduced by Rump in \cite{rump2007braces}. 
Some works where this structure is studied are
\cite{cacs1,cacs4,cedo2014braces,smock2018skew,CeSmVe19,JeKuVaVe19}, just to name a few.
As skew braces are the analogue version of braces for non-involutive non-degenerate solutions, \textit{q-cycle sets}, introduced recently by Rump \cite{rump2019covering}, are the analogue version of cycle sets for left non-degenerate solutions that are not necessarily involutive. Recall that a set $X$ with two binary operations $\cdotp$ and $:$ is a q-cycle set if $\sigma_x:X\longrightarrow X, y\mapsto x\cdotp y$ is invertible, for every $x\in X$, and 
\begin{align*}
(x\cdotp y)\cdotp (x\cdotp z) &=(y:x)\cdotp (y\cdotp z)\\
(x:y):(x:z) &= (y\cdotp x):(y:z)\\
(x\cdotp y):(x\cdotp z) &= (y:x)\cdotp (y: z)
\end{align*}
hold, for all $x,y,z\in X$. If $(X,\cdotp,:)$ is a q-cycle set, then the pair $(X,r)$, where $r(x,y):=(\sigma_x^{-1}(y),\sigma_x^{-1}(y):x)$, for all $x,y\in X$, is a left non-degenerate solution. Conversely, if $(X,r)$ is a left non-degenerate solution, then the operations $\cdotp$ and $:$ given by $x\cdotp y:=\lambda_x^{-1}(y)$ and $x:y:=\rho_{\lambda_y^{-1}(x)}(y)$, for all $x,y\in X$, give rise to a q-cycle set. Thanks to this correspondence, one can move from left non-degenerate solutions to q-cycle sets.

In the first part of the paper, we focus on non-degeneracy of bijective solutions. In particular, using q-cycle sets, we show that any finite left non-degenerate bijective solutions is right non-degenerate, giving a positive answer to \cite[Question 4.2]{cedo2019structure}. 
In this way, we also extend the corresponding result for finite involutive solutions provided by Rump in \cite[Theorem 2]{rump2005decomposition} in terms of cycle sets, and by Jespers and Okni\'{n}ski in \cite[Corollary 2.3]{JeOk05} 
in terms of monoids of $I$-type. In addition, using the result on finite left non-degenerate solutions, we give sufficient conditions to find a class non-degenerate solutions that include properly the finite left non-degenerate ones.\\
In Section $4$, we introduce an equivalence relation for a q-cycle set which we call \emph{retraction}, in analogy to the retraction of cycle sets, that is compatible with respect to the two operations. Since the retraction of a non-degenerate q-cycle set is again a non-degenerate q-cycle set, 
we obtain an alternative proof of \cite[Theorem 3.3]{JeP18} for non-degenerate solutions $(X,r)$ with the additional property of $r$ bijective. Clearly, we include the result showed in \cite[Lemma 8.4]{LeVe19}.  

The final goal of this paper is to develop a theory of extensions of q-cycle sets. 
Following the ideas of Vendramin for cycle sets \cite{vendramin2016extensions} and of Nelson and Watterberg for biracks \cite{nelson2013birack}, we introduce a suitable notion of \textit{dynamical extension} of q-cycle sets. Even if dynamical extensions are often hard to find, we introduce several examples of dynamical extensions that are relatively easy to compute. 
Moreover, we introduce several families of dynamical extensions that provide non-degenerate solutions that are different from those obtained by skew braces.\\
As an application, we construct a \emph{semidirect product} of q-cycle sets, which is a generalization of the semidirect product of cycle sets introduced by Rump \cite{rump2008semidirect} and, referring to \cite[Problem 4.15]{bardakov2019general}, gives rise to a general definition of semidirect product of biquandles. 

\section{Basic results and examples}
To study non-degenerate solutions, Rump recently introduced in \cite{rump2019covering} the notion of q-cycle set. Recall that a set $X$ together with two binary operations $\cdotp$ and $:$ is a \textit{q-cycle set} if the function $\sigma_x:X\longrightarrow X,$ $y\mapsto x\cdotp y$ is bijective, for every $x\in X$, and
\begin{align}
(x\cdotp y)\cdotp (x\cdotp z) &= (y:x)\cdotp (y\cdotp z)\label{ug1}\\
(x:y):(x:z) &= (y\cdotp x):(y:z)\label{ug2}\\
(x\cdotp y):(x\cdotp z) &= (y:x)\cdotp (y: z)\label{ug3}
\end{align}
hold for all $x,y,z\in X$. 
Hereinafter, we denote by $\mathfrak{q}$ and $\mathfrak{q'}$ the squaring maps related to $\cdotp$ and $:$, i.e., the maps given by
$$
\mathfrak{q}(x):=x\cdotp x 
\qquad\text{and}\qquad
\mathfrak{q'}(x):=x: x,
$$ 
for every $x\in X$. \\
A q-cycle set $(X,\cdotp,:)$ is said to be \textit{regular} if the function $\delta_x:X\longrightarrow X,$ $y\mapsto x:y$ is bijective, for every $x\in X$, and \textit{non-degenerate} if it is regular and $\mathfrak{q}$ and $\mathfrak{q'}$ are bijective.  
At first sight, the notion of non-degeneracy introduced by Rump seems different, but  \cite[Corollary 2]{rump2019covering} ensures that the two definitions are equivalent.\\
The left non-degenerate solution $(X,r)$ provided by a q-cycle set $X$ is given by 
\begin{align*}
r(x,y):= (\sigma_{x}^{-1}(y),\sigma_{x}^{-1}(y):x),
\end{align*}
for all $x,y\in X$, is a left-non degenerate solution. Conversely, if $(X,r)$ is a left non-degenerate solution and $r(x,y)=(\lambda_x(y),\rho_y(x))$, for all $x,y\in X$, then the operations $\cdotp$ and $:$ given by
$$x\cdotp y:=\lambda_x^{-1}(y) \qquad  x:y:=\rho_{\lambda_y^{-1}(x)}(y)$$
for all $x,y\in X$, give rise to a q-cycle set, see \cite[Proposition 1]{rump2019covering}. As one would expect, non-degenerate q-cycle sets corresponds to non-degenerate bijective solutions. 
\medskip

Q-cycle sets allow us to construct a lot of families of solutions obtained in several recent papers. To show this, in the rest of this section we collect several examples of q-cycle sets and highlight some connections with other algebraic structures.\\
At first, note that q-cycle sets with $x\cdotp y=x:y$, for all $x,y\in X$, correspond to cycle sets.
Clearly, in this case, \eqref{ug1}, \eqref{ug2}, and \eqref{ug3} are reduced to 
\begin{align}\label{eq:cycle-set}
(x\cdotp y)\cdotp (x\cdotp z) 
=
(y\cdotp x)\cdotp (y\cdotp z).
\end{align}
These examples of q-cycle sets determine the ``celebrated'' left non-degenerate solutions that are also involutive, that in particular are maps $r:X\times X\to X\times X$ of the form
    \begin{align*}
        r\left(x, y\right) 
        = \left(\sigma^{-1}_x\left(y\right),\sigma^{-1}_x\left(y\right)\cdotp x\right),
    \end{align*}
    for all $x,y\in X$.
    
In \cite{rump2019covering} Rump showed some connections between q-cycle sets and other algebraic structures. In particular, he observed that skew braces correspond to particular q-cycle sets for which the squaring maps $\mathfrak{q}$ and $\mathfrak{q'}$ coincide (see \cite[Corollary 2]{rump2019covering}). 
In this context, we note that an analogue result follows for left semi-braces, algebraic structures
that are a generalisation of skew braces.
We specify that, in this paper, for a left semi-brace we mean the structure introduced in \cite{cacs3}, named left cancellative left semi-brace in \cite{JeVa19}.
Specifically, we say that a set $B$ with two operations $+$ and $\circ$ is  a \emph{left semi-brace} if $\left(B,+\right)$ is a left cancellative semigroup, $\left(B, \circ\right)$ is a group, and 
\begin{align*}
    a\circ\left(b + c\right) = a\circ b + a\circ\left(a^- + c\right) 
\end{align*}
holds, for all $a,b,c\in B$, where $a^-$ is the inverse of $a$ with respect to $\circ$.\\
If $\left(B, +, \circ\right)$ is a left semi-brace, by \cite[Theorem 9]{cacs3} we have that the map $r_B:B\times B\to B\times B$ defined by
\begin{align*}
    r_B\left(a, b\right) 
    = 
    \left(a\circ\left(a^- + b\right), \left(a^- + b\right)^-\circ b\right),
\end{align*}
for all $a,b\in B$, is a left non-degenerate solution.
Hence, the q-cycle set associated to $B$ is the structure $\left(B, \cdotp, :\right)$ where $\cdotp$ and $:$ are given by
\begin{align*}
    a\cdotp b 
    &= \lambda_{a^-}\left(b\right) 
    = a^-\circ\left(a + b\right)\\
    a : b 
    &= \left(\rho_a\left(b^-\right)\right)^-
    = a^-\circ\left(b + a\right) ,
\end{align*}
for all $a,b\in B$. Therefore, it is now easy to see that the squaring maps $\mathfrak{q}$ and $\mathfrak{q'}$ coincide also for q-cycle sets obtained by left semi-braces that are not skew-braces. 
In the following example, we show a family of concrete examples of q-cycle sets determined by left semi-braces.
\begin{ex}\label{ex:semi-brace}
Let $\left(B,\circ\right)$ be a group, $f$ an idempotent endomorphism of $\left(B,\circ\right)$, and $\left(B,+,\circ\right)$ the left semi-brace in \cite[Example 10]{cacs3} where the sum is given by
\begin{align*}
    a + b:= b\circ f\left(a\right),
\end{align*}
for all $a,b\in B$. 
Since the left non-degenerate solution associated to $\left(B, +,\circ\right)$ is the map $r_B:B:B\times B\to B\times B$ given by
\begin{align*}
    r\left(a,b\right) 
    = \left(a\circ b\circ f\left(a\right)^{-1}, f\left(a\right)\right),
\end{align*}
we obtain that $\left(B, \cdotp, :\right)$ is a q-cycle set with $\cdotp$ and $:$ defined by
\begin{align*}
    a\cdotp b:= a^{-1}\circ b\circ f\left(a\right)
    \qquad \qquad
    a : b:= f\left(b\right),
\end{align*}
for all $a,b\in B$. Clearly, if $f\neq \id_B$, then $\left(B, \cdotp, :\right)$ is not regular. 

More in general, let us observe that the hypothesis of idempotency on $f$ is unnecessary to have a structure of q-cycle set on the group $\left(B,\circ\right)$. Obviously, $\left(B, \cdotp,:\right)$ is regular if and only if the map $f$ is bijective.
%
%
\end{ex}
\medskip
%

However, in general, the squaring maps $\mathfrak{q}$ and $\mathfrak{q'}$ do not coincide, as one can see in \cref{ex-reg-q-cycleset} and \cref{ex:left_quasi-normal} below.

\begin{exs}\label{ex-reg-q-cycleset}\hspace{1mm}
\begin{itemize}
    \item[1)] 
    Let $k\in\mathbb{N}$, $B:=\mathbb{Z}/m\mathbb{Z}$, $\cdotp$ and $:$ the binary operations on $B$ given by $x\cdotp y:= y + k$ and $x:y = y$, for all $x,y\in B$. Then, $(B,\cdotp,:)$ is a regular q-cycle set and the solution associated to $(B,\cdotp,:)$ is the map $r:B\times B\to B\times B$ given by
    \begin{align*}
        r\left(x, y\right)
        = \left(y-k, x\right),
    \end{align*}
    for all $x,y\in B$.
    \item[2)] Let $B:=\mathbb{Z}/3\mathbb{Z}$ and $\alpha$ the element of $\Sym(B)$ given by $\alpha:=(0\quad 1)$. Moreover, let $\cdotp$ and $:$ the binary operations on $B$ given by $x\cdotp y:=\alpha(y)$ if $x\in \{0,1\}$ and $x\cdotp y:=y$ otherwise and $x:y:=y$ if $x\in \{0,1\}$ and $x: y:=\alpha(y)$ otherwise. Then, $(B,\cdotp,:)$ is a regular q-cycle set and the solution $r$ associated to $(B,\cdotp,:)$ is defined by
    \begin{align*}
        r\left(x, y\right)
        = \begin{cases}
        \left(\alpha\left(y\right), \ x\right) &\mbox{\quad if \ $x\in\lbrace 0,1 \rbrace$}\\
        \left(y, \ 2\right) &\mbox{\quad if \ $x = 2$}
        \end{cases},
    \end{align*}
    for all $x,y\in B$.
    It is a routine computation to verify that $r$ satisfies $r^4 = \id_{B\times B}$.
\end{itemize}
\end{exs}
\medskip

The previous examples are all regular q-cycle sets. We conclude the section by showing examples of q-cycle sets that are not regular.
\begin{exs}\label{ex:left_quasi-normal}\hspace{1mm}
\begin{itemize}
    \item[1)] 
    Let $X$ be a set, $\cdotp$ and $:$ two operations on $X$ given by
    \begin{align*}
    x\cdotp y:= y
    \qquad \qquad
    x : y:= k
    \end{align*}
    for all $x,y\in X$, where $k\in X$.  
    Then, it is easy to check that the structure $\left(X, \cdotp, :\right)$ is a q-cycle set. Clearly, if $|X| > 1$, then $\left(X, \cdotp, :\right)$ is not regular. Moreover, the left non-degenerate solution associated to $\left(X, \cdotp, :\right)$ is the map $r:X\times X\to X\times X$ given by
    \begin{align*}
        r\left(x,y\right) 
        = \left(y, k\right),
    \end{align*}
    for all $x,y\in X$. In particular, let us observe that $r$ satisfies the relation $r^3 = r^2$.
    \item[2)] Let $X$ be a left quasi-normal semigroup, i.e., $X$ is a semigroup such that $xyz = xzyz$, for all $x,y,z\in X$. Then, it is straightforward to check that $X$ endowed by the operations $\cdotp$ and $:$ defined by
    \begin{align*}
        x\cdotp y:= y
        \qquad \qquad
        x : y := yx,
    \end{align*}
    for all $x,y\in X$, is a q-cycle set.
    Clearly, $(X,\cdotp,:)$ in general is not regular. Note that the solution associated to $X$ is given by
    \begin{align*}
        r\left(x,y\right)
        = \left(y, xy\right),
    \end{align*}
    for all $x,y\in X$, and it coincides with that provided in \cite[Examples 6.2]{CaMaSt19x}. In particular, we have that r satisfies the property $r^5 = r^3$.
\end{itemize}
\end{exs}

\section{Non-degeneracy of q-cycle sets}
This section focuses on non-degeneracy of q-cycle sets.  Rump in \cite[Theorem 2]{rump2005decomposition} and  Jespers and Okni\'{n}ski in \cite[Corollary 2.3]{JeOk05}, using different languages, showed that every finite involutive left non-degenerate solution is non-degenerate. Recently, Ced\'o, Jespers and Verwimp asked if the same result is true for every finite bijective left non-degenerate solution.
\medskip

\textbf{Question.} \cite[Question 4.2]{cedo2019structure} Is every finite left non-degenerate solution  non-degenerate?
\medskip

A first aim is to use the theory of q-cycle sets to answer in the positive sense. Moreover, we give sufficient conditions to find a class of non-degenerate solutions that include properly the finite left non-degenerate ones.
\medskip

Initially, we provide the ``q-version'' of the proof of \cite[Theorem 2]{rump2005decomposition} provided recently by Jedlicka, Pilitowska, and Zamojska-Dzienio in \cite[Proposition 4.7]{JeP18}.\\
Hereinafter, for any regular q-cycle set $X$, we denote by $\mathcal{G}(X)$ the subgroup of $\Sym(X)$ given by
$$\mathcal{G}(X):=<\{\sigma_x|x\in X\}\cup \{\delta_x|x\in X\}> $$
and we call it the \emph{permutation group associated to $X$}.
\begin{lemma}\label{lemmaimp}
Let $(X,\cdotp,:)$ be a regular q-cycle set such that the associated permutation group $\mathcal{G}(X)$ is finite. Then the squaring maps $\mathfrak{q}$ and $\mathfrak{q'}$ are surjective.
\end{lemma}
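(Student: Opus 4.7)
My plan is to exploit the finiteness of the action of $\mathcal{G}(X)$ on $X$. Since every element of $\mathcal{G}(X)$ is a permutation of $X$ and $\mathcal{G}(X)$ is finite, every orbit of the natural action $\mathcal{G}(X)\curvearrowright X$ is finite, even if $X$ itself is infinite. This is the only place the hypothesis enters.

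The next step is the observation that both squaring maps preserve these orbits. Indeed, $\mathfrak{q}(x)=\sigma_x(x)$ with $\sigma_x\in\mathcal{G}(X)$, so $\mathfrak{q}(x)\in\mathcal{G}(X)\cdot x$, and likewise $\mathfrak{q'}(x)=\delta_x(x)\in\mathcal{G}(X)\cdot x$ by regularity. Hence $\mathfrak{q}$ and $\mathfrak{q'}$ restrict to self-maps of each finite orbit $\mathcal{O}\subseteq X$. On a finite set a self-map is surjective if and only if it is injective, so the global surjectivity statement on $X$ is reduced to showing that the restrictions $\mathfrak{q}|_{\mathcal{O}}$ and $\mathfrak{q'}|_{\mathcal{O}}$ are injective.

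For the injectivity argument I would exploit the q-cycle set axioms \eqref{ug1}--\eqref{ug3} in the following way. Setting $y=z$ in \eqref{ug1} gives the relation $\mathfrak{q}(\sigma_x(y))=\sigma_{\delta_y(x)}(\mathfrak{q}(y))$, and analogous substitutions in \eqref{ug2} and \eqref{ug3} yield the companion identities
\[
\mathfrak{q'}(\sigma_x(y))=\sigma_{\delta_y(x)}(\mathfrak{q'}(y)),\qquad
\mathfrak{q'}(\delta_x(y))=\delta_{\sigma_y(x)}(\mathfrak{q'}(y)),
\]
together with $\mathfrak{q}(\delta_x(y))=\delta_{\sigma_y(x)}(\mathfrak{q}(y))$ (obtained from \eqref{ug3} with $x=z$). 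These say that $\mathfrak{q}$ and $\mathfrak{q'}$ are equivariant with respect to the $\mathcal{G}(X)$-action up to a twist of the acting permutation. Also, taking $x=y$ in \eqref{ug1}--\eqref{ug2} produces $\sigma_{\mathfrak{q}(x)}=\sigma_{\mathfrak{q'}(x)}$ and $\delta_{\mathfrak{q}(x)}=\delta_{\mathfrak{q'}(x)}$, which I expect to be the pivot for ruling out coincidences $\mathfrak{q}(x)=\mathfrak{q}(y)$ with $x\neq y$ inside a single finite orbit.

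The main obstacle I anticipate is precisely this injectivity step: the twisted equivariance above is weaker than genuine $\mathcal{G}(X)$-equivariance, and the presence of both operations $\cdotp$ and $:$ means the Rump/Jedlicka--Pilitowska--Zamojska-Dzienio argument, which is framed purely in terms of a single action, has to be threaded carefully through the three axioms \eqref{ug1}--\eqref{ug3}. Once injectivity on each orbit is in place, surjectivity of $\mathfrak{q}$ and $\mathfrak{q'}$ on the whole of $X$ is immediate by assembling the bijections on orbits.
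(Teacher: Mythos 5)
Your setup is sound: finiteness of $\mathcal{G}(X)$ does give finite orbits, both squaring maps do preserve orbits since $\mathfrak{q}(x)=\sigma_x(x)$ and $\mathfrak{q'}(x)=\delta_x(x)$, and all four twisted-equivariance identities you extract from \eqref{ug1}--\eqref{ug3} are correct. But the proof has a genuine gap exactly where you flag it: you reduce surjectivity to injectivity of $\mathfrak{q}|_{\mathcal{O}}$ and $\mathfrak{q'}|_{\mathcal{O}}$ on each finite orbit, and then never establish that injectivity. The identities $\sigma_{\mathfrak{q}(x)}=\sigma_{\mathfrak{q'}(x)}$ and $\delta_{\mathfrak{q}(x)}=\delta_{\mathfrak{q'}(x)}$ you propose as the ``pivot'' compare the two squaring maps at the \emph{same} point $x$; they give no purchase on a coincidence $\mathfrak{q}(x)=\mathfrak{q}(y)$ with $x\neq y$. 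In this circle of ideas injectivity is normally deduced \emph{from} surjectivity via finiteness (that is precisely how \cref{first} follows from this lemma), so making injectivity the primary target inverts the natural order of the argument and leaves its central step unproven.

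The repair is short, and everything needed is already on your page. Your identity $\mathfrak{q'}(\delta_x(y))=\delta_{\sigma_y(x)}(\mathfrak{q'}(y))$ is, after substituting $x\mapsto\sigma_y^{-1}(x)$, exactly the paper's key relation \eqref{cons}: it says that $\delta_w(\mathfrak{q'}(y))$ lies in the image of $\mathfrak{q'}$ for \emph{all} $w,y\in X$. Likewise $\mathfrak{q'}(\sigma_x(y))=\sigma_{\delta_y(x)}(\mathfrak{q'}(y))$, together with regularity (so that $\delta_y$ is onto), shows that $\sigma_w(\mathfrak{q'}(y))$ lies in the image of $\mathfrak{q'}$ for all $w,y$. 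Hence the image of $\mathfrak{q'}$ is stable under every generator of $\mathcal{G}(X)$, and---since each generator has finite order in the finite group $\mathcal{G}(X)$, so its inverse is a positive power of it---under all of $\mathcal{G}(X)$. As each orbit meets this image (it contains $\mathfrak{q'}(z)=\delta_z(z)$ for any of its points $z$), each orbit is contained in it, and $\mathfrak{q'}$ is surjective; the argument for $\mathfrak{q}$ is symmetric. The paper runs the same idea in a more economical form: it iterates the single permutation $\delta_z$ on $z$ until $\delta_z^m(z)=z$ and uses \eqref{cons} to see that every iterate from the second one onwards is a value of $\mathfrak{q'}$. Either way, no injectivity statement is needed.
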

\begin{proof}
At first, note that thanks to \eqref{ug2}, we have
\begin{equation}\label{cons}
(\sigma_x^{-1}(y): x):(\sigma_x^{-1}(y): x)=y:(x:x)
\end{equation}
for every $x,y\in X$. Now, since $\mathcal{G}(X)$ is finite, there exists a natural number $m$ such that $\delta_z^m(z)=z$, for every $z\in X$. If $m\in\{1,2\}$ then $\mathfrak{q'}$ is surjective by \eqref{cons}. 
If $m>2$ and $z\in X$, then
\begin{eqnarray}
z&=& \delta_z^m(z) \nonumber \\
&=&\delta_z^{m-2}(z:(z: z)) \nonumber \\
&=& \delta_z^{m-2}(\mathfrak{q'}(\sigma_z^{-1}(z): z))) \nonumber \\
&=& \delta_z^{m-3}(z:\mathfrak{q'}(\sigma_z^{-1}(z): z))) \nonumber
\end{eqnarray}
and applying repeatedly \eqref{cons}, we obtain that $\mathfrak{q'}(v)=z$, for some $v\in X$, hence $\mathfrak{q'}$ is surjective. In a similar way, one can show that $\mathfrak{q}$ is surjective, therefore the thesis follows.
\end{proof}

\begin{theor}\label{first}
Let $(X,\cdotp,:)$ be a finite regular q-cycle set. Then $X$ is non-degenerate.
\end{theor}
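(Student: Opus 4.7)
The plan is to reduce the theorem to a direct application of \cref{lemmaimp}. Since non-degeneracy for a regular q-cycle set amounts to showing that the squaring maps $\mathfrak{q}$ and $\mathfrak{q'}$ are bijective, and $X$ is finite, it suffices to establish their surjectivity.

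First I would observe that the permutation group $\mathcal{G}(X) = \langle \{\sigma_x \mid x\in X\}\cup\{\delta_x\mid x\in X\}\rangle$ is a subgroup of $\Sym(X)$. Because $X$ is finite, $\Sym(X)$ is finite, hence $\mathcal{G}(X)$ is finite as well. This is the only ingredient required to invoke the hypotheses of \cref{lemmaimp}, which together with the regularity of $(X,\cdotp,:)$ yields surjectivity of both $\mathfrak{q}$ and $\mathfrak{q'}$.

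Finally, since $\mathfrak{q}:X\to X$ and $\mathfrak{q'}:X\to X$ are surjective self-maps of the finite set $X$, they are automatically injective, and therefore bijective. Together with the standing assumption that $X$ is regular, this is precisely the definition of a non-degenerate q-cycle set, which completes the argument.

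There is no real obstacle here: all of the content has been absorbed into \cref{lemmaimp}, and the only new observation needed is the (trivial) passage from finiteness of $X$ to finiteness of $\mathcal{G}(X)\leq \Sym(X)$. The proof is therefore essentially a one-line corollary of the previous lemma combined with the pigeonhole principle for finite sets.
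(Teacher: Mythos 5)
Your proof is correct and follows exactly the same route as the paper's: finiteness of $X$ gives finiteness of $\mathcal{G}(X)\leq\Sym(X)$, \cref{lemmaimp} then yields surjectivity of $\mathfrak{q}$ and $\mathfrak{q'}$, and surjectivity on a finite set gives bijectivity. Nothing is missing.
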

\begin{proof}
If $X$ is finite then so $\mathcal{G}(X)$ is, hence by the previous lemma $\mathfrak{q}$ and $\mathfrak{q'}$ are surjective. Again, since $X$ is finite, $\mathfrak{q}$ and $\mathfrak{q'}$ are also injective, hence the thesis.
\end{proof}

As a consequence of the previous theorem, we obtain the following result that extends the analogous one for finite involutive solutions showed by Rump in \cite[Theorem 2]{rump2005decomposition} and by Jespers and Okni\'{n}ski in \cite[Corollary 2.3]{JeOk05}. 
\begin{cor}
Let $(X,r)$ be a finite bijective left non-degenerate solution. Then $(X,r)$ is non-degenerate.
\end{cor}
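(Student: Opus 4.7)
The plan is to reduce to the q-cycle set machinery and invoke \Cref{first}. Writing $r(x,y)=(\lambda_x(y),\rho_y(x))$, the solution $(X,r)$ is encoded by the q-cycle set $(X,\cdotp,:)$ with $x\cdotp y=\lambda_x^{-1}(y)$ and $x:y=\rho_{\lambda_y^{-1}(x)}(y)$, and the correspondence recalled in Section~2 states that a non-degenerate q-cycle set gives back a non-degenerate bijective solution. Hence it suffices to prove that $(X,\cdotp,:)$ is a finite regular q-cycle set, after which \Cref{first} supplies full non-degeneracy of the q-cycle set and the correspondence transports this to right non-degeneracy of $r$.

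The only nontrivial step is therefore regularity: one needs $\delta_x\colon y\mapsto x:y$ to be a bijection of $X$ for each $x$, and because $X$ is finite it is enough to verify injectivity. Here I would exploit bijectivity of $r$ directly. Given $\delta_x(y_1)=\delta_x(y_2)$, set $u_i:=\lambda_{y_i}^{-1}(x)$, so that $\lambda_{y_i}(u_i)=x$ and, by the definition of $:$, $\rho_{u_i}(y_i)=x:y_i$ for $i=1,2$. Since the right-hand sides are equal by assumption, the pairs $r(y_1,u_1)=(x,\,x:y_1)$ and $r(y_2,u_2)=(x,\,x:y_2)$ coincide, and injectivity of $r$ forces $(y_1,u_1)=(y_2,u_2)$, whence $y_1=y_2$.

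The main obstacle is precisely this regularity step: it is where bijectivity of $r$, and not merely left non-degeneracy, is used. In particular, the non-regular q-cycle sets displayed in \Cref{ex:left_quasi-normal} correspond to non-bijective maps $r$, which is consistent with the need for this hypothesis. Once regularity is in hand, \Cref{first} does the heavy lifting, and the correspondence recalled in Section~2 closes the argument by transferring non-degeneracy from the q-cycle set to the solution.
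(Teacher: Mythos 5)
Your proof is correct and takes essentially the same route as the paper: pass to the associated q-cycle set, establish that it is a finite regular q-cycle set, apply \cref{first}, and transport non-degeneracy back to $(X,r)$ via the correspondence of Section~2. The only difference is that you explicitly verify regularity (injectivity of $\delta_x$ from injectivity of $r$, then bijectivity by finiteness), a step the paper delegates entirely to the cited Proposition~1 of Rump's covering paper; your verification is valid.
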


\begin{proof}
It follows by the previous theorem and \cite[Proposition 1]{rump2019covering}.
\end{proof}
\medskip
Now, we use the previous result on finite solutions to find a larger class of non-degenerate solutions. At first, we have to introduce a preliminary lemma. We recall that a subset $Y$ of a q-cycle set $X$ is said to be $\mathcal{G}(X)$-invariant if $g\left(y\right)\in Y$, for all $g\in\mathcal{G}(X)$ and $y\in Y$.

\begin{lemma}\label{invar}
Let $X$ be a regular q-cycle set and $Y$ a $\mathcal{G}(X)$-invariant subset of $X$. 
Then, $Y$ is a regular q-cycle set with respect to the operations induced by $X$. Moreover, if $X$ is non-degenerate then so $Y$ is.
\end{lemma}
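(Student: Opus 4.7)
The plan is to verify the two assertions in sequence, with the second depending on a sharper observation about $\mathcal{G}(X)$-invariance.

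First, for the regular q-cycle set structure on $Y$, I would note that for any $y_1,y_2\in Y$ we have $y_1\cdotp y_2=\sigma_{y_1}(y_2)\in Y$ and $y_1:y_2=\delta_{y_1}(y_2)\in Y$, since $\sigma_{y_1},\delta_{y_1}\in\mathcal{G}(X)$ and $Y$ is $\mathcal{G}(X)$-invariant. Hence both operations restrict to $Y$. To see that $\sigma_y|_Y$ and $\delta_y|_Y$ are bijections of $Y$ for $y\in Y$, I use that $\mathcal{G}(X)$ is a \emph{group}: both $\sigma_y^{-1}$ and $\delta_y^{-1}$ also belong to $\mathcal{G}(X)$, so they too map $Y$ into $Y$. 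This forces the set-theoretic equalities $\sigma_y(Y)=Y$ and $\delta_y(Y)=Y$. The three defining identities \eqref{ug1}–\eqref{ug3} are inherited element-wise from $X$, so $(Y,\cdotp,:)$ is a regular q-cycle set.

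For the non-degeneracy part, I need the squaring maps $\mathfrak{q}$ and $\mathfrak{q'}$ of $Y$, which are simply the restrictions of those of $X$, to be bijections of $Y$. Closure $\mathfrak{q}(Y)\subseteq Y$ follows from the first paragraph, and injectivity is inherited from $X$; the only real task is surjectivity. The key observation, already used above, is that for every $g\in\mathcal{G}(X)$ the inclusions $g(Y)\subseteq Y$ and $g^{-1}(Y)\subseteq Y$ force $g(Y)=Y$, and consequently $g(X\setminus Y)=X\setminus Y$. In particular, for $x\in X\setminus Y$, the element $\sigma_x\in\mathcal{G}(X)$ sends $x$ to $\sigma_x(x)=\mathfrak{q}(x)\in X\setminus Y$. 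Therefore $\mathfrak{q}(X\setminus Y)\subseteq X\setminus Y$, and since $\mathfrak{q}\in\Sym(X)$ this gives $\mathfrak{q}(Y)=Y$. The identical argument with $\delta_x$ in place of $\sigma_x$ handles $\mathfrak{q'}$.

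The main obstacle—really the only delicate point—is the surjectivity of $\mathfrak{q}|_Y$: $\mathcal{G}(X)$-invariance by itself would not rule out that some element outside $Y$ gets sent into $Y$ by $\mathfrak{q}$. It is essential that $\mathcal{G}(X)$ is a group, so that invariance upgrades to the genuine equality $g(Y)=Y$ for each generator, giving the complementary invariance $g(X\setminus Y)=X\setminus Y$ that is needed to locate preimages of points of $Y$ inside $Y$.
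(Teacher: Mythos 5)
Your proof is correct; the paper itself dismisses this lemma as "a straightforward calculation," and your write-up supplies exactly the details being omitted, including the one genuinely non-automatic point (surjectivity of the restricted squaring maps). For that step your complement argument ($g(Y)=Y$ for every $g\in\mathcal{G}(X)$, hence $g(X\setminus Y)=X\setminus Y$, hence $\mathfrak{q}(X\setminus Y)\subseteq X\setminus Y$) works, but note there is a more direct route that the paper implicitly uses in the theorem that follows: if $y\in Y$ and $z=\mathfrak{q}^{-1}(y)$, then $y=\sigma_z(z)$ gives $z=\sigma_z^{-1}(y)\in Y$ immediately, since $\sigma_z^{-1}\in\mathcal{G}(X)$ and $Y$ is invariant. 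Both arguments rely on the same essential fact you correctly isolate, namely that $\mathcal{G}(X)$ is a group, so invariance under the generators upgrades to invariance under their inverses.
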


\begin{proof}
Is is a straightforward calculation.
\end{proof}

\begin{theor}
Let $X$ be a regular q-cycle set such that the orbits of $X$ with respect to the action of $\mathcal{G}(X)$ have finite size. Then, $X$ is non-degenerate.
\end{theor}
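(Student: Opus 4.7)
The plan is to reduce to the finite case already settled in \cref{first} by slicing $X$ into its $\mathcal{G}(X)$-orbits, each of which is finite by hypothesis and $\mathcal{G}(X)$-invariant by construction.

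First, I would fix an arbitrary $x\in X$ and let $Y_x$ denote its $\mathcal{G}(X)$-orbit. By hypothesis $Y_x$ is finite, and by \cref{invar} the orbit $Y_x$, equipped with the operations inherited from $X$, is itself a finite regular q-cycle set. Then \cref{first} applies to $Y_x$: the squaring maps of the q-cycle set $Y_x$ are bijective on $Y_x$. Since the operations on $Y_x$ are the restrictions of the operations on $X$, these squaring maps coincide with the restrictions $\mathfrak{q}|_{Y_x}$ and $\mathfrak{q'}|_{Y_x}$.

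The second ingredient is the observation that $\mathfrak{q}$ and $\mathfrak{q'}$ respect the orbit decomposition $X=\bigsqcup_{Y\in X/\mathcal{G}(X)} Y$. Indeed, $\mathfrak{q}(x)=x\cdotp x=\sigma_x(x)$ with $\sigma_x\in\mathcal{G}(X)$, so $\mathfrak{q}(x)$ lies in the $\mathcal{G}(X)$-orbit of $x$; similarly $\mathfrak{q'}(x)=x:x=\delta_x(x)$ lies in the $\mathcal{G}(X)$-orbit of $x$ since $\delta_x\in\mathcal{G}(X)$ by regularity. Hence for every orbit $Y$ we have $\mathfrak{q}(Y)\subseteq Y$ and $\mathfrak{q'}(Y)\subseteq Y$.

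Combining the two, $\mathfrak{q}$ and $\mathfrak{q'}$ are bijective on each orbit of the partition $X=\bigsqcup Y$, so they are bijective on $X$. Together with regularity, this gives the non-degeneracy of $(X,\cdotp,:)$. The only point that requires a line of justification is that the operations on the orbit $Y_x$ really are well defined from those on $X$ (i.e.\ that $Y_x$ is closed under $\cdotp$ and $:$), but this is exactly the content of $\mathcal{G}(X)$-invariance: for $y,z\in Y_x$ one has $y\cdotp z=\sigma_y(z)\in Y_x$ and $y:z=\delta_y(z)\in Y_x$. There is no real obstacle here; the argument is essentially the same reduction Rump used in the involutive case, made possible by \cref{invar} and \cref{first}.
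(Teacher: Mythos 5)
Your proof is correct and follows essentially the same route as the paper's: reduce to the finite case via \cref{invar} and \cref{first} on each $\mathcal{G}(X)$-orbit, then use the fact that $\mathfrak{q}$ and $\mathfrak{q'}$ preserve orbits (since $\mathfrak{q}(x)=\sigma_x(x)$ and $\mathfrak{q'}(x)=\delta_x(x)$) to glue the orbit-wise bijections into global ones. The paper phrases surjectivity and injectivity as two separate checks rather than as ``bijective on each block of a respected partition,'' but the content is identical.
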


\begin{proof}
We have to show that the squaring maps $\mathfrak{q}$ and $\mathfrak{q'}$ are bijective. Let $x$ be an element of $X$ and $Y$ the orbit of $X$ such that $x\in Y$. 
By the previous lemma and the hypothesis, $Y$ is a finite regular q-cycle set and, by \cref{first}, it is non-degenerate. Therefore, there exist $x_1,x_2\in Y$ such that $\mathfrak{q}(x_1)=x$ and $\mathfrak{q'}(x_2)=x$. 
Hence, we have that the squaring maps of $X$ are surjective. Now, suppose that $y,z,t$ are elements of $X$ such that $\mathfrak{q}(y)=\mathfrak{q}(z)=t$ and let $T$ be the orbit of $X$ such that $t\in T$. Then, we have that $y=\sigma_y^{-1}(t)\in T$ and $z=\sigma_z^{-1}(t)\in T$. Since $T$ is non-degenerate, we obtain that $y=z$, hence the squaring map $\mathfrak{q}$ of $X$ is injective. In a similar way, one can show that the squaring map $\mathfrak{q'}$ of $X$ is injective.
\end{proof}

We conclude the section by translating the previous result in terms of solutions. Note that, if $X$ is a regular q-cycle set and $(X,r)$ the associated solution, where $r(x,y):=(\lambda_x(y),\rho_y(x))$, for all $x,y\in X$, then the associated permutation group $\mathcal{G}(X)$ coincides with the subgroup of $\Sym(X)$ generated by the set $\{\lambda_x|x\in X\}\cup\{\eta_x|x\in X\}$, where $\eta_x(y)=\rho_{\lambda^{-1}_y(x)}(y)$, for all $x,y \in X$.

\begin{cor}
Let $(X,r)$ be a bijective left non-degenerate solution such that the orbits of $X$ respect to the action of $\mathcal{G}(X)$ have finite size. Then $(X,r)$ is non-degenerate.
\end{cor}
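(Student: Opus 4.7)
The plan is to reduce the statement to the preceding theorem via the bijective correspondence between left non-degenerate solutions and q-cycle sets established by Rump in \cite[Proposition 1]{rump2019covering}. Starting from the bijective left non-degenerate solution $(X,r)$ with $r(x,y)=(\lambda_x(y),\rho_y(x))$, I would form the associated q-cycle set $(X,\cdotp,:)$ by setting $x\cdotp y:=\lambda_x^{-1}(y)$ and $x:y:=\rho_{\lambda_y^{-1}(x)}(y)=\eta_x(y)$, where $\eta_x$ is the map defined in the note preceding the corollary. The proof then amounts to feeding this q-cycle set into the previous theorem and translating the conclusion back, once two compatibility points have been checked.

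The first point is the regularity of $(X,\cdotp,:)$, i.e., that $\delta_x:y\mapsto x:y$ is a bijection for every $x\in X$. Using left non-degeneracy, the equation $r(x,y)=(u,v)$ rewrites as $y=\lambda_x^{-1}(u)$ together with $\rho_{\lambda_x^{-1}(u)}(x)=v$, and the second condition is precisely $\eta_u(x)=v$. Since $\eta_u=\delta_u$, the bijectivity of $r$ translates exactly into the bijectivity of each $\delta_u$, which is the regularity we need.

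The second point is that the group $\mathcal{G}(X)$ appearing in the hypothesis of the corollary agrees with the permutation group of the q-cycle set in the sense of the previous theorem. This is immediate from the note preceding the corollary together with the identifications $\sigma_x=\lambda_x^{-1}$ and $\delta_x=\eta_x$: both descriptions yield the same subgroup of $\Sym(X)$, hence the same orbits on $X$, so the finite-orbit hypothesis transfers verbatim. With these two observations in place, the preceding theorem applies to $(X,\cdotp,:)$ and gives that it is a non-degenerate q-cycle set, which under Rump's correspondence means exactly that $(X,r)$ is a non-degenerate solution. The only genuine (and mild) obstacle is the translation of regularity from the bijectivity of $r$ in the first point; everything else is bookkeeping that invokes the theorem just proved.
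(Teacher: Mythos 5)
Your proof is correct and follows exactly the route the paper takes: the paper's own proof is simply "it follows from the previous theorem and Rump's Proposition 1," and you have supplied the two translation details (bijectivity of $r$ $\Leftrightarrow$ regularity of the associated q-cycle set via $\delta_u=\eta_u$, and the identification of the two descriptions of $\mathcal{G}(X)$) that the paper leaves implicit.
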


\begin{proof}
It follows by the previous theorem and \cite[Proposition 1]{rump2019covering}.
\end{proof}

\section{Retraction of regular q-cycle sets}

In this section we introduce the ``q-version'' of the retract relation introduced by Rump in \cite{rump2005decomposition} for cycle sets. Namely, we introduce a suitable notion of retraction that is a congruence of q-cycle sets, i.e., it is a congruence with respect to the two operations $\cdotp$ and $:$. 
Furthermore, we show that, for the specific class of non-degenerate q-cycle sets, the quotient of a q-cycle set by its retraction is still a q-cycle set.

\begin{defin}
Let $(X,\cdotp,:)$ be a regular q-cycle set and $\sim$ the relation on $X$ given by
$$x\sim y:\Longleftrightarrow \sigma_x=\sigma_y 
\quad \mbox{and}\quad   
\delta_x=\delta_y$$
for every $x,y\in X$. Then $\sim$ will be called the \textit{retract relation}.
\end{defin}

In analogy to \cite[Lemma 2]{rump2005decomposition}, we show that $\sim$ is always a congruence of a q-cycle set.

\begin{prop}
Let $(X,\cdotp,:)$ be a regular q-cycle set. Then, the retraction $\sim$ is a congruence of $(X,\cdotp,:)$.
\end{prop}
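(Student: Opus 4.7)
The plan is to translate the three defining identities of a q-cycle set into operator identities for the families $\{\sigma_x\}_{x\in X}$ and $\{\delta_x\}_{x\in X}$, and then read off the congruence property directly. Concretely, I aim to derive the four formulas
\begin{align*}
\sigma_{x\cdotp y} &= \sigma_{\delta_y(x)}\circ\sigma_y\circ\sigma_x^{-1}, &
\delta_{x\cdotp y} &= \sigma_{\delta_y(x)}\circ\delta_y\circ\sigma_x^{-1},\\
\sigma_{x:y} &= \delta_{\sigma_y(x)}\circ\sigma_y\circ\delta_x^{-1}, &
\delta_{x:y} &= \delta_{\sigma_y(x)}\circ\delta_y\circ\delta_x^{-1},
\end{align*}
for all $x,y\in X$. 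Each of these comes from one of \eqref{ug1}, \eqref{ug2}, \eqref{ug3}: rewrite the left-hand side as $\sigma$ or $\delta$ applied to $\sigma_x(z)$ or $\delta_x(z)$, equate with the corresponding rewrite of the right-hand side, and then cancel $\sigma_x$ or $\delta_x$ on the right using bijectivity (here the hypothesis of regularity is essential, as it guarantees that $\delta_x$ is invertible).

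With these four formulas in hand, the congruence property follows almost verbatim. First I would note that $\sim$ is an equivalence relation because it is defined by equality of functions. For right-compatibility, I would remark that if $x\sim x'$ then $x\cdotp y=\sigma_x(y)=\sigma_{x'}(y)=x'\cdotp y$ and similarly $x:y=x':y$, so these products are in fact equal, hence $\sim$-related. For left-compatibility, I would suppose $y\sim y'$, so $\sigma_y=\sigma_{y'}$ and $\delta_y=\delta_{y'}$, and in particular $\sigma_y(x)=\sigma_{y'}(x)$ and $\delta_y(x)=\delta_{y'}(x)$ for every $x\in X$. Substituting in the four displayed identities shows $\sigma_{x\cdotp y}=\sigma_{x\cdotp y'}$, $\delta_{x\cdotp y}=\delta_{x\cdotp y'}$, $\sigma_{x:y}=\sigma_{x:y'}$, $\delta_{x:y}=\delta_{x:y'}$, so $x\cdotp y\sim x\cdotp y'$ and $x:y\sim x:y'$. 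Combining with right-compatibility yields the congruence property.

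The only mildly subtle step is deriving the third formula, $\sigma_{x:y}$, since \eqref{ug1} and \eqref{ug2} give formulas involving $\sigma_{x\cdotp y}$ and $\delta_{x:y}$ respectively, and \eqref{ug3} naturally produces $\delta_{x\cdotp y}$; to pull out $\sigma_{x:y}$ one swaps the roles of $x$ and $y$ in the rewrite of \eqref{ug3}, obtaining $\delta_{\sigma_y(x)}(\sigma_y(z)) = \sigma_{\delta_x(y)}(\delta_x(z))$, and then cancels $\delta_x$ on the right. This is the main (and only) obstacle; everything else is bookkeeping. Once the four formulas are in place the proof is essentially a one-line check, analogous to Rump's argument for cycle sets in \cite[Lemma~2]{rump2005decomposition} but requiring the second pair of formulas because a q-cycle set carries the additional operation $:$\,.
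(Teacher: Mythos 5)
Your proof is correct and is essentially the paper's argument in operator form: your four conjugation-type formulas are just \eqref{ug1}--\eqref{ug3} precomposed with $\sigma_x^{-1}$ or $\delta_x^{-1}$, and the paper performs the same cancellation pointwise by evaluating at $x\cdotp k$ (resp.\ $x:k$) and invoking surjectivity of the relevant translation. Your split into right-compatibility (which gives literal equality of the products) and left-compatibility (read off from the four formulas) is a clean repackaging, but not a genuinely different route.
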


\begin{proof}
Let $x,y,z,t$ be such that $x\sim y$ and $z\sim t$. Then, using \eqref{ug1},
\begin{eqnarray}
(x\cdotp z)\cdotp (x\cdotp k) &=& (y\cdotp z)\cdotp (y\cdotp k)=(z:y)\cdotp (z\cdotp k)=(t:y)\cdotp (t\cdotp k)\nonumber \\
&=& (y\cdotp t)\cdotp (y\cdotp k),\nonumber 
\end{eqnarray}
therefore $\sigma_{x\cdotp z}=\sigma_{y\cdotp t}$.
Moreover, using \eqref{ug2},
\begin{eqnarray}
(x: z)\cdotp (x\cdotp k) &=& (y: z)\cdotp (y\cdotp k) = (z\cdotp y)\cdotp (z\cdotp k)=(t\cdotp y)\cdotp (t\cdotp k)\nonumber \\
&=& (y: t)\cdotp (y\cdotp k),\nonumber 
\end{eqnarray}
therefore $\sigma_{x: z}=\sigma_{y: t}$. Similarly, one can show that $\delta_{x\cdotp z}=\delta_{y\cdotp t}$ and $\delta_{x: z}=\delta_{y: t}$, hence the thesis.
\end{proof}

%
%
\medskip

To prove the main result of this section we make use of the extension on the free group  $F(X)$ of a q-cycle set $X$. Initially, we recall the following proposition contained in \cite{rump2019covering}.
\begin{prop}\emph{\cite[Theorem 1]{rump2019covering}}\label{teorump}
Let $X$ be a non-degenerate regular q-cycle set. Then $X$ can be extended to a q-cycle set on the free group $(F(X),\circ)$ such that $1\cdotp a=1:a=a$ and
\begin{itemize}
\item[1)] $(a\circ b)\cdotp c=a\cdotp (b\cdotp c)$
\item[2)] $(a\circ b): c=a: (b: c)$
\item[3)] $a\cdotp(b\circ c)=((c:a)\cdotp b)\circ (a\cdotp c)$
\item[4)] $a:(b\circ c)=((c\cdotp a): b)\circ (a: c)$, 
\end{itemize}
are satisfied, for all $a,b,c\in F(X)$. Furthermore, in $F(X)$ we have that 
\begin{align*}
   x\cdotp y^{-1}=(\delta_y^{-1}(x)\cdotp y)^{-1} 
   \qquad
   \text{and}
   \qquad
   x: y^{-1}=(\sigma_y^{-1}(x): y)^{-1} 
\end{align*}
hold, for all $x,y\in X$.
\end{prop}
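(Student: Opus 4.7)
The plan is to construct $\cdotp$ and $:$ on $F(X) \times F(X)$ by simultaneous recursion on the length of reduced words, taking (1)--(4) and the inverse formulas as definitional rewrites, and then to verify well-definedness together with the q-cycle set axioms on $F(X)$.

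First, I would fix the base data. The value $1 \cdotp a = 1 : a = a$ is forced by (1) and (2) applied with second factor $1$. On $X \times X$ the operations are already given. For $x \in X$, applying (1) and (2) to $x \circ x^{-1} = 1$ forces $x^{-1} \cdotp y = \sigma_x^{-1}(y)$ and $x^{-1} : y = \delta_x^{-1}(y)$, which make sense on $X$ by the q-cycle set axiom and regularity. Then applying (3) and (4) to $y \circ y^{-1} = 1$, and using the previous identifications to evaluate $y^{-1} : x$ and $y^{-1} \cdotp x$, forces
\[
x \cdotp y^{-1} = (\delta_y^{-1}(x) \cdotp y)^{-1}, \qquad x : y^{-1} = (\sigma_y^{-1}(x) : y)^{-1},
\]
exactly the advertised formulas. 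With these base cases in hand, I would extend to arbitrary $a, b \in F(X)$ inductively by using (1) to reduce the first argument and (3) to reduce the second, and dually for $:$ via (2) and (4). Non-degeneracy of $\mathfrak{q}$ and $\mathfrak{q'}$ is needed to guarantee that the extended $\sigma_a$ and $\delta_a$ remain bijective on $F(X)$, so that the recursion is admissible at every stage.

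The hardest part is well-definedness. The inductive clauses may give distinct parsings of the same word potentially distinct values, e.g.\ $a \cdotp ((b_1 \circ b_2) \circ b_3)$ computed with outer split $(b_1 \circ b_2, b_3)$ versus $(b_1, b_2 \circ b_3)$, and similarly the extension must respect the defining relations $y \circ y^{-1} = y^{-1} \circ y = 1$ of $F(X)$. Both coherences are shown by induction on word length, reducing the two expressions to a common form by repeated use of the q-cycle set axioms on $X$ together with (1)--(4). Once well-definedness is secured, properties (1)--(4) and the inverse formulas hold by construction, and the q-cycle set identities (ug1)--(ug3) on $F(X)$ follow by a further induction on the combined lengths of the three arguments, each step reducing to the corresponding axiom on $X$ via (1)--(4). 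The bijectivity of $\sigma_a$ for every $a \in F(X)$ is then automatic from the reformulation $\sigma_{a \circ b} = \sigma_a \sigma_b$ of (1) together with $\sigma_{a^{-1}} = \sigma_a^{-1}$ obtained from (1) at $a \circ a^{-1} = 1$, and similarly for $\delta_a$.
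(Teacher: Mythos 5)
First, a point of order: the paper does not prove this proposition at all --- it is quoted verbatim from Rump's covering-theory paper (\cite[Theorem 1]{rump2019covering}) and used as a black box --- so there is no in-paper argument to compare yours against; your text has to stand on its own as a proof of Rump's theorem. The uniqueness half of your sketch is fine: $1\cdotp a=a$ together with $1)$ applied to $x\circ x^{-1}=1$ forces $\sigma_{x^{-1}}=\sigma_x^{-1}$, property $2)$ forces $\delta_{x^{-1}}=\delta_x^{-1}$ (this is where regularity enters), and $3)$, $4)$ applied to $y\circ y^{-1}=1$ --- after first checking $a\cdotp 1=1:a\cdotp 1=1$, which you skip but which does follow from $3)$ with $b=c=1$ --- yield the two inverse formulas. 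So the extension, if it exists, is determined.

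The existence half is where the entire theorem lives, and your sketch defers it. Two concrete gaps. (a) The recursion ``use $1)$ to reduce the first argument and $3)$ to reduce the second'' is not well-founded as written: clause $3)$ rewrites $a\cdotp(b\circ c)$ in terms of $(c:a)\cdotp b$, and the factor $c:a$ has the full-length $a$ back in second position, so ``reducing the second argument'' re-feeds the first. The induction only closes if you first fix the first argument to be a single letter $x^{\pm1}$ (so that $c:a$ is again a single letter), recurse on the length of the second argument, and \emph{simultaneously} prove that each $\sigma_x$ preserves word length and is bijective on each length stratum --- otherwise $\sigma_{x^{-1}}=\sigma_x^{-1}$ is not usable mid-induction. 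None of this bookkeeping is set up. (b) The coherence claims --- independence of the parsing of $b\circ c$, compatibility with the relations $yy^{-1}=y^{-1}y=1$, and the validity of \eqref{ug1}--\eqref{ug3} on all of $F(X)$ including inverted entries --- are asserted in one sentence, yet they are the substance of Rump's proof. In particular you never locate where bijectivity of $\mathfrak{q}$ and $\mathfrak{q'}$ is actually used. It is not a formality: already for $\sigma_x$ to permute the length-one words one needs $y\mapsto \delta_y^{-1}(x)\cdotp y$ to be a bijection of $X$, and this genuinely fails for regular but degenerate q-cycle sets (e.g.\ Rump's cycle set $x\cdotp y=y-\min\{0,x\}$ on $\mathbb{Z}$, which is regular, has this map non-injective), so the theorem is false without the non-degeneracy hypothesis and any correct proof must invoke $\mathfrak{q}^{-1}$, $\mathfrak{q'}^{-1}$ at a specific point. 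As it stands this is a sensible plan of attack, not a proof.
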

\medskip

As a consequence of the previous proposition we have the following result.
\begin{lemma}\label{prop-teorump}
Let $X$ be a non-degenerate regular q-cycle set. Then, the following hold:
\begin{enumerate}
    \item[1)] $a\cdotp x\in X$, for all $a\in F(X)$ and $x\in X$;
    \item[2)] $a:x\in X$, for all $a\in F(X)$ and $x\in X$;
    \item[3)] for all $x,x_1,\ldots,x_n\in X$ and $\epsilon,\epsilon_1,\ldots,\epsilon_n\in \lbrace -1, 1 \rbrace$, there exist $t_1,...,t_n\in X$ and $\eta, \eta_1,...,\eta_n\in \{1,-1\}$ such that
    \begin{align*}
        \sigma_{x^{\epsilon}}(x_1^{\epsilon_1}\circ\cdots \circ x_{n}^{\epsilon_{n}})=t_1^{\eta_1}\circ\cdots \circ t_n^{\eta_n};
    \end{align*}
    \item[4)] for all $x, x_1,\ldots,x_n\in X$ and $\epsilon,\epsilon_1,\ldots,\epsilon_n\in \lbrace -1, 1 \rbrace$, there exist $t_1,...,t_n\in X$ and $\eta, \eta_1,...,\eta_n,\epsilon\in \{1,-1\}$ such that
    \begin{align*}
        \delta_{x^{\epsilon}}(x_1^{\epsilon_1}\circ\cdots \circ x_{n}^{\epsilon_{n}})=t_1^{\eta_1}\circ\cdots \circ t_n^{\eta_n}.
    \end{align*}
\end{enumerate}
\begin{proof}
$1)$ \ If $x,y\in X$, by $1)$ in \cref{teorump}, it follows that
\begin{align*}
    \sigma_{x^{-1}}\sigma_{x}\left(y\right)
    = x^{-1}\cdotp\left(x\cdotp y\right)
    = \left(x^{-1}\circ x\right)\cdotp y
    = 1\cdotp y
    = y
\end{align*}
and, analogously, $\sigma_{x}\sigma_{x^{-1}}\left(y\right) = y$, i.e., $\sigma_{x^{-1}} = \sigma_{x}^{-1}$.
Thus, if $a\in F(X)$, there exist $x_1,\ldots, x_n\in X$ and $\epsilon_1,\ldots, \epsilon_n\in\lbrace-1,1\rbrace$ such that
$a = x_1^{\epsilon_1}\circ\cdots\circ x_n^{\epsilon_n}$, hence, by $1)$ in \cref{teorump}, it holds that
\begin{align*}
    a\cdotp x 
    = \sigma_{x_1^{\epsilon_1}}\ldots\sigma_{x_n^{\epsilon_n}}\left(x\right)
    = \sigma_{x_1}^{\epsilon_1}\ldots\sigma_{x_n}^{\epsilon_n}\left(x\right)\in X.
\end{align*}
$2)$ \ The proof is similar to $1)$.\\
$3)$ \ 
We proceed by induction on $n\in\mathbb{N}$. Let $n = 1$, $x, x_1\in X$, $\epsilon,\epsilon_1\in\lbrace -1,1 \rbrace$. Let us note that if $\epsilon_1 = 1$, then the thesis follows by $1)$. If $\epsilon_1 = -1$, 
since by \cref{teorump}, $a\cdotp 1 = 1$, for every $a\in F(X)$, by $3)$ in \cref{teorump} it follows that 
$1 = x^{\epsilon}\cdotp \left(x_1\circ x_1^{-1}\right) = \left(\left(x_1^{-1} : x^{\epsilon}\right)\cdotp x_1\right)\circ \left(x^{\epsilon}\cdotp x_1^{-1}\right)$ 
and so
\begin{align*} 
    \sigma_{x^{\epsilon}}\left(x_1^{-1}\right)
    = \left(\left(x_1^{-1} : x^{\epsilon}\right)\cdotp x_1\right)^{-1}
\end{align*}
with $\left(x_1^{-1} : x^{\epsilon}\right)\cdotp x_1\in X$ by $1)$.
Now, suppose the thesis holds for a natural number $n$ and let $x_1,\ldots,x_n\in X$, $\epsilon_1,\ldots,\epsilon_n\in\lbrace -1,1 \rbrace$. Then, by $3)$ in \cref{teorump}, we have that
\begin{align*}
    \sigma_{x^{\epsilon}}\left(x_1^{\epsilon_1}\circ\cdots\circ x_{n+1}^{\epsilon_{n+1}}\right)
    = \left(\left(x_{n+1}^{\epsilon_{n+1}} : x^{\epsilon} \right)\cdotp \left(x_1^{\epsilon_1}\circ \cdots \circ x_n^{\epsilon_n}\right)\right)
    \circ\left(x^{\epsilon}\cdotp x_{n+1}^{\epsilon_{n+1}}\right),
\end{align*}
therefore, by induction hypothesis, the thesis follows.\\
$4)$ \ The proof is similar to $3)$.
\end{proof}
\end{lemma}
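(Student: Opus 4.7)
The plan is to settle $(1)$ and $(2)$ by a straightforward reduction to a length-one case, and then attack $(3)$ and $(4)$ by a simultaneous induction on the word length $n$ built on the two ``distributivity'' identities in \cref{teorump}.

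For $(1)$, I would express an arbitrary $a\in F(X)$ as a reduced word $a=x_1^{\epsilon_1}\circ\cdots\circ x_n^{\epsilon_n}$ with $x_i\in X$ and $\epsilon_i\in\{\pm 1\}$, and then peel off factors using the associativity-type identity $(a\circ b)\cdotp c=a\cdotp(b\cdotp c)$ from \cref{teorump}. This reduces the statement to the length-one claim $\sigma_{x^{\pm 1}}(y)\in X$ for $x,y\in X$. The case $\epsilon=1$ is tautological, while for $\epsilon=-1$ the computation $x\cdotp(x^{-1}\cdotp y)=(x\circ x^{-1})\cdotp y=1\cdotp y=y$ identifies $x^{-1}\cdotp y$ with $\sigma_x^{-1}(y)\in X$. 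Part $(2)$ goes through by the mirror argument, replacing $\cdotp$ by $:$ throughout.

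For $(3)$ and $(4)$, I plan a simultaneous induction on $n$, since the natural inductive step for $(3)$ requires the base case of $(4)$ to keep intermediate coefficients in $X\cup X^{-1}$, and vice versa. The base case $n=1$ with $\epsilon_1=+1$ is immediate from $(1)$--$(2)$. For $\epsilon_1=-1$ I would use $x^{\epsilon}\cdotp 1=1$ together with $a\cdotp(b\circ c)=((c:a)\cdotp b)\circ(a\cdotp c)$ to expand $1=x^{\epsilon}\cdotp(x_1\circ x_1^{-1})$ and solve
$$
x^{\epsilon}\cdotp x_1^{-1}=\big((x_1^{-1}:x^{\epsilon})\cdotp x_1\big)^{-1},
$$
which has the required form $t^{-1}$ with $t\in X$ thanks to $(1)$ and $(2)$. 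In the inductive step the same distributivity identity yields
$$
\sigma_{x^{\epsilon}}\big(x_1^{\epsilon_1}\circ\cdots\circ x_{n+1}^{\epsilon_{n+1}}\big)=\big((x_{n+1}^{\epsilon_{n+1}}:x^{\epsilon})\cdotp(x_1^{\epsilon_1}\circ\cdots\circ x_n^{\epsilon_n})\big)\circ\big(x^{\epsilon}\cdotp x_{n+1}^{\epsilon_{n+1}}\big).
$$
The rightmost factor lies in $X\cup X^{-1}$ by the base case, and the left factor is $\sigma_{z^{\eta}}$ applied to a length-$n$ word, where $z^{\eta}:=x_{n+1}^{\epsilon_{n+1}}:x^{\epsilon}\in X\cup X^{-1}$ by the base case of $(4)$, so the inductive hypothesis finishes the job. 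Claim $(4)$ is handled symmetrically using the partner identity $a:(b\circ c)=((c\cdotp a):b)\circ(a:c)$.

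The main obstacle I anticipate is the interlocking character of $(3)$ and $(4)$: the two distributivity formulas mix $\cdotp$ and $:$, so neither claim can be proved in isolation, and a clean argument either carries both statements through the induction in parallel or isolates an auxiliary lemma about how $F(X)$ acts on length-$n$ words. Once this is arranged, a routine case split on the signs $\epsilon$ and $\epsilon_{n+1}$, together with the inversion rules $x\cdotp y^{-1}=(\delta_y^{-1}(x)\cdotp y)^{-1}$ and $x:y^{-1}=(\sigma_y^{-1}(x):y)^{-1}$ recorded in \cref{teorump}, should close the argument.
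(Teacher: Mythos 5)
Your proposal follows essentially the same route as the paper: reduce $(1)$--$(2)$ to the length-one case via $(a\circ b)\cdotp c=a\cdotp(b\cdotp c)$, and prove $(3)$--$(4)$ by induction on $n$, handling the base case $\epsilon_1=-1$ through $1=x^{\epsilon}\cdotp(x_1\circ x_1^{-1})$ and the inductive step through the mixed distributivity identity. Your explicit observation that $(3)$ and $(4)$ must be run as a simultaneous induction (to know that $x_{n+1}^{\epsilon_{n+1}}:x^{\epsilon}$ lies in $X\cup X^{-1}$ before invoking the inductive hypothesis) is a point the paper leaves implicit, but the argument is the same.
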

\medskip

Now, let us introduce the following preliminary lemmas.
\begin{lemma}\label{lemsig}
Let $X$ be a non-degenerate regular q-cycle set and $x,y\in X$. Then, the following statements are equivalent
\begin{itemize}
\item[1)] $x\cdotp z=y\cdotp z$ and $x:z=y:z$, for every $z\in X$;
\item[2)] $x\cdotp z=y\cdotp z$ and $x:z=y:z$, for every $z\in F(X)$.
\end{itemize}
\end{lemma}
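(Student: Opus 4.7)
The implication $(2)\Rightarrow(1)$ is immediate since $X\subseteq F(X)$. For the converse, the plan is to exploit the extended q-cycle set structure on $F(X)$ supplied by \cref{teorump} and \cref{prop-teorump}, together with the non-degeneracy of $X$, in two steps.

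\emph{First step: closure of the retract relation under the maps $\sigma_w^{\pm 1}$ and $\delta_w^{\pm 1}$ on $X$.} Denote by $\sim$ the retract relation on $X$, so that hypothesis $(1)$ reads $x\sim y$. Since $\sim$ is a congruence of $(X,\cdotp,:)$, the maps $\sigma_w$ and $\delta_w$ already preserve $\sim$ for every $w\in X$. To show that $\delta_w^{-1}$ also preserves $\sim$, set $u:=\delta_w^{-1}(x)$ and $v:=\delta_w^{-1}(y)$ in $X$, so that $w:u=x$ and $w:v=y$. Applying \eqref{ug1} with $(a,b)=(u,w)$ and with $(a,b)=(v,w)$, and using the hypothesis $x\sim y$ (together with $w\cdotp c\in X$ for $c\in X$) to identify the right-hand sides, one obtains
\begin{align*}
(u\cdotp w)\cdotp(u\cdotp c)=x\cdotp(w\cdotp c)=y\cdotp(w\cdotp c)=(v\cdotp w)\cdotp(v\cdotp c),
\end{align*}
for every $c\in X$. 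Setting $c=w$ yields $\mathfrak{q}(u\cdotp w)=\mathfrak{q}(v\cdotp w)$; injectivity of $\mathfrak{q}$ (by non-degeneracy of $X$) forces $u\cdotp w=v\cdotp w$, and cancelling this common left factor gives $u\cdotp c=v\cdotp c$ for all $c\in X$. A parallel computation based on \eqref{ug2} delivers $u:c=v:c$ for all $c\in X$; hence $u\sim v$. The proof that $\sigma_w^{-1}$ preserves $\sim$ is analogous, based on \eqref{ug3} and \eqref{ug1} together with the injectivity of $\mathfrak{q}$.

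\emph{Second step: induction on the word length $n$ of $z=z_1^{\epsilon_1}\circ\cdots\circ z_n^{\epsilon_n}\in F(X)$ with $z_i\in X$ and $\epsilon_i\in\{-1,1\}$.} For $n\leq 1$ the cases $z=1$ and $z\in X$ are immediate, while for $z=w^{-1}$ the identity $x\cdotp w^{-1}=(\delta_w^{-1}(x)\cdotp w)^{-1}$ from \cref{teorump}, combined with the first step applied to $\delta_w^{-1}(x)\sim\delta_w^{-1}(y)$, yields $\sigma_x(w^{-1})=\sigma_y(w^{-1})$; the case of $\delta$ is dual. For $n\geq 2$ write $z=z_1^{\epsilon_1}\circ z'$ with $z'=z_2^{\epsilon_2}\circ\cdots\circ z_n^{\epsilon_n}$ and use item 3) of \cref{teorump}:
\begin{align*}
\sigma_x(z)=\sigma_{\delta_{z'}(x)}(z_1^{\epsilon_1})\circ\sigma_x(z').
\end{align*}
The inductive hypothesis gives $\sigma_x(z')=\sigma_y(z')$. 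Moreover, $\delta_{z'}=\delta_{z_2^{\epsilon_2}}\cdots\delta_{z_n^{\epsilon_n}}$ maps $X$ into $X$ and, by the first step, preserves $\sim$, so $\delta_{z'}(x)\sim\delta_{z'}(y)$. Applying the case $n=1$ gives $\sigma_{\delta_{z'}(x)}(z_1^{\epsilon_1})=\sigma_{\delta_{z'}(y)}(z_1^{\epsilon_1})$, whence $\sigma_x(z)=\sigma_y(z)$. The symmetric argument based on item 4) of \cref{teorump} establishes $\delta_x(z)=\delta_y(z)$.

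The main obstacle is the first step: congruence of $\sim$ alone does not guarantee closure under the inverse maps $\sigma_w^{-1}$ and $\delta_w^{-1}$, and a cancellation is required. It is precisely here that the non-degeneracy hypothesis---namely, the injectivity of $\mathfrak{q}$ and $\mathfrak{q'}$---enters the argument in an essential way; once closure is secured, the second step reduces to a straightforward induction guided by the extension formulas of \cref{teorump}.
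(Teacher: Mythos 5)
Your proof is correct, and while it shares the paper's overall skeleton (induction on the word length in $F(X)$, driven by the extension formulas of \cref{teorump}, with non-degeneracy entering through injectivity of the squaring maps when an inverse letter appears), the organization is genuinely different. You first isolate a standalone closure statement -- that the relation ``$\sigma_x=\sigma_y$ and $\delta_x=\delta_y$ on $X$'' is preserved by $\sigma_w^{-1}$ and $\delta_w^{-1}$ -- and then peel the word from the left via items $3)$ and $4)$ of \cref{teorump}, transferring the hypothesis to the pairs $\left(\delta_{z'}(x),\delta_{z'}(y)\right)$ and $\left(\sigma_{z'}(x),\sigma_{z'}(y)\right)$. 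The paper peels from the right and, instead of proving closure, uses \eqref{ug2} to rewrite the troublesome term $\left(x_{n+1}^{\epsilon_{n+1}}\cdotp x\right):b$ as $\left(x:x_{n+1}^{\epsilon_{n+1}}\right):\left(x:\delta^{-1}_{x_{n+1}^{\epsilon_{n+1}}}(b)\right)$, so that only $x$ itself ever acts and the inductive hypothesis applies directly (with \cref{prop-teorump} guaranteeing that $\delta^{-1}_{x_{n+1}^{\epsilon_{n+1}}}(b)$ is again a word of length $n$); injectivity of $\mathfrak{q'}$ is then needed only in the base case. Your closure lemma is the stronger and more reusable statement (it essentially says the retract relation is $\mathcal{G}(X)$-invariant) and it makes the inductive step cleaner, at the price of a more delicate preliminary step. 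One detail you should spell out there: the ``parallel computation'' giving $u:c=v:c$ is not a literal dualization of the one giving $u\cdotp c=v\cdotp c$, because dualizing \eqref{ug1} to \eqref{ug2} produces $(w\cdotp u):(w:c)$ on the right-hand side, which you do not control. The correct instantiation is \eqref{ug2} with $(w,u,c)$, namely $(w:u):(w:c)=(u\cdotp w):(u:c)$, combined with the already-established $u\cdotp w=v\cdotp w$ and the injectivity of $\delta_{u\cdotp w}$ (i.e., regularity) rather than injectivity of $\mathfrak{q'}$. With that made explicit, the argument is complete.
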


\begin{proof}
Since every element of $F(X)$ can be written as $x_1^{\epsilon_1}\circ...\circ x_n^{\epsilon_n}$ for some $x_1,...,x_n\in X$ and $\epsilon_1,...,\epsilon_n\in \{1,-1\}$ we prove ``$1)\Rightarrow 2)$'' by induction on $n$. Hence, suppose that $x\cdotp t=y\cdotp t$ and $x:t=y:t$, for every $t\in X$.\\
If $n=1$ and $z=x_1^{\epsilon_1}$ for some $x_1\in X,$ $\epsilon_1\in \{1,-1\}$ we have two cases: if $\epsilon_1=1$, then $x: z=y: z$ by the hypothesis, if $\epsilon_1=-1$, by equality \eqref{ug2} and the hypothesis we have that 
\begin{eqnarray}
& &x:(x_1:x_1)=y:(x_1:x_1)\Rightarrow  \nonumber \\
&\Rightarrow &(x_1\cdotp \sigma_{x_1}^{-1}(x)):(x_1:x_1)=(x_1\cdotp \sigma_{x_1}^{-1}(y)):(x_1:x_1) \nonumber \\
&\Rightarrow & (\sigma_{x_1}^{-1}(x):x_1):(\sigma_{x_1}^{-1}(x):x_1)=(\sigma_{x_1}^{-1}(y):x_1):(\sigma_{x_1}^{-1}(y):x_1)\nonumber \\
&\Rightarrow & \mathfrak{q'}(\sigma_{x_1}^{-1}(x):x_1)=\mathfrak{q'}(\sigma_{x_1}^{-1}(y):x_1) \nonumber \\
&\Rightarrow & \sigma_{x_1}^{-1}(x):x_1=\sigma_{x_1}^{-1}(y):x_1 \nonumber
\end{eqnarray} 
and, since $F(X)$ is a group, it follows that $(\sigma_{x_1}^{-1}(x):x_1)^{-1}=(\sigma_{x_1}^{-1}(y):x_1)^{-1}$ and hence, by \cref{teorump}, $x: x_1^{-1}=y: x_1^{-1}$. Similarly, one can show that $x\cdotp x_1^{-1}=y\cdotp x_1^{-1}$.\\
Now, suppose the thesis for a natural number $n$ and let $z:=x_1^{\epsilon_1}\circ...\circ x_{n+1}^{\epsilon_{n+1}}\in F(X)$, where $x_1,...,x_{n+1}\in X$ and $\epsilon_1,...,\epsilon_{n+1}\in \{1,-1\}$. Then, by $4)$ in \cref{teorump}, we have that 
\begin{eqnarray}
x:(x_1^{\epsilon_1}\circ...\circ x_{n+1}^{\epsilon_{n+1}}) &=& ((x_{n+1}^{\epsilon_{n+1}}\cdotp x):x_1^{\epsilon_1}\circ...\circ x_{n}^{\epsilon_{n}})\circ(x:x_{n+1}^{\epsilon_{n+1}}) \nonumber \\
&=& ((x_{n+1}^{\epsilon_{n+1}}\cdotp x):(x_{n+1}^{\epsilon_{n+1}}:\delta^{-1}_{x_{n+1}^{\epsilon_{n+1}}}(x_1^{\epsilon_1}\circ...\circ x_{n}^{\epsilon_{n}})))\circ(x:x_{n+1}^{\epsilon_{n+1}}) \nonumber \\
&=& ((x:x_{n+1}^{\epsilon_{n+1}}):(x:\delta^{-1}_{x_{n+1}^{\epsilon_{n+1}}}(x_1^{\epsilon_1}\circ...\circ x_{n}^{\epsilon_{n}})))\circ(x:x_{n+1}^{\epsilon_{n+1}}) \nonumber
\end{eqnarray}
and similarly
$$y:(x_1^{\epsilon_1}\circ...\circ x_{n+1}^{\epsilon_{n+1}})  =((y:x_{n+1}^{\epsilon_{n+1}}):(y:\delta^{-1}_{x_{n+1}^{\epsilon_{n+1}}}(x_1^{\epsilon_1}\circ...\circ x_{n}^{\epsilon_{n}})))\circ(y:x_{n+1}^{\epsilon_{n+1}}). $$
Note that, by \cref{prop-teorump}, there exist $t_1,...,t_n\in X$ and $\eta_1,...,\eta_n\in \{1,-1\}$ such that 
$\delta^{-1}_{x_{n+1}^{\epsilon_{n+1}}}(x_1^{\epsilon_1}\circ...\circ x_{n}^{\epsilon_{n}})=t_1^{\eta_1}\circ...\circ t_n^{\eta_n}$,
and this fact, together with the inductive hypothesis, implies that 
$$x:\delta^{-1}_{x_{n+1}^{\epsilon_{n+1}}}(x_1^{\epsilon_1}\circ...\circ x_{n}^{\epsilon_{n}}))=y:\delta^{-1}_{x_{n+1}^{\epsilon_{n+1}}}(x_1^{\epsilon_1}\circ...\circ x_{n}^{\epsilon_{n}})).$$
Moreover, again by inductive hypothesis, it follows that $x:x_{n+1}^{\epsilon_{n+1}}=y:x_{n+1}^{\epsilon_{n+1}}$. 
Therefore, we showed that $x:z=y:z$. In a similar way, one can show that $x\cdotp z=y\cdotp z$, hence ``$1)\Rightarrow 2)$''. The converse implication is trivial.
\end{proof}

Let $X$ be a regular non-degenerate q-cycle set and $F(X)$ its extension to the free group on $X$. In \cite[Section 4]{rump2019covering} Rump define the \emph{socle} of $F(X)$, denoted by $Soc(F(X))$, as the set
$$Soc(F(X)):=\{a \hspace{1mm} |\hspace{1mm} a\in F(X),\hspace{1mm} a\cdotp b=a:b=b\hspace{3mm}\forall\hspace{1mm} b\in F(X) \} $$
and he showed that $Soc(F(X)) $ is a subgroup of $F(X)$ and that the factor $F(X)/Soc(F(X)) $ is again a non-degenerate q-cycle set. From this fact the following lemma follows.
\begin{lemma}
Let $X$ be a non-degenerate q-cycle set and $F(X)$ its extension to the free group on $X$. Then, $\Ret(F(X))$ is a regular non-degenerate q-cycle set.
\end{lemma}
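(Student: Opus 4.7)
The plan is to identify the retract relation on $F(X)$ with the congruence induced by Rump's socle $Soc(F(X))$, and then to invoke the result of \cite[Section 4]{rump2019covering} recalled just above the statement.

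First I would observe that in the free-group extension $F(X)$ one has $\sigma_{b^{-1}}=\sigma_b^{-1}$ and $\delta_{b^{-1}}=\delta_b^{-1}$ as self-maps of $F(X)$, for every $b\in F(X)$. This is immediate from clauses $1)$ and $2)$ of \cref{teorump}: applying them to the decomposition $1=b\circ b^{-1}$ together with the normalization $1\cdotp c=1:c=c$ gives $\sigma_b\sigma_{b^{-1}}(c)=c$ and $\delta_b\delta_{b^{-1}}(c)=c$ for every $c\in F(X)$, and symmetrically on the other side.

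Next, for $a,b\in F(X)$ set $s:=a\circ b^{-1}$. Using parts $1)$ and $2)$ of \cref{teorump} once more,
\[
s\cdotp c=a\cdotp(b^{-1}\cdotp c)=\sigma_a\sigma_b^{-1}(c),\qquad s:c=\delta_a\delta_b^{-1}(c),
\]
for every $c\in F(X)$. Hence $a\sim b$, i.e.\ $\sigma_a=\sigma_b$ and $\delta_a=\delta_b$ on $F(X)$, if and only if $s\cdotp c=s:c=c$ for every $c\in F(X)$, that is, if and only if $a\circ b^{-1}\in Soc(F(X))$. Therefore the equivalence classes of $\sim$ are precisely the cosets of the socle, and the two operations $\cdotp$ and $:$ descend to the same quotient, so $\Ret(F(X))$ and $F(X)/Soc(F(X))$ coincide as q-cycle sets.

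The theorem of Rump then tells us that $F(X)/Soc(F(X))$ is a non-degenerate q-cycle set, and since non-degeneracy in this paper already includes regularity by definition, the lemma follows. The only delicate step is the verification that $\sigma_{b^{-1}}$ and $\delta_{b^{-1}}$ are the set-theoretic inverses of $\sigma_b$ and $\delta_b$ on the whole of $F(X)$; once this is established, the identification with Rump's socle quotient and the conclusion are immediate.
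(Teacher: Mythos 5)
Your proof is correct and takes essentially the same route as the paper: the paper's own argument consists precisely of the observation that $x\sim y$ if and only if $x\circ y^{-1}\in Soc(F(X))$, followed by an appeal to Rump's result that $F(X)/Soc(F(X))$ is a non-degenerate q-cycle set. You have merely spelled out the step the paper dismisses as ``easy to see'', namely that $\sigma_{a\circ b^{-1}}=\sigma_a\sigma_b^{-1}$ and $\delta_{a\circ b^{-1}}=\delta_a\delta_b^{-1}$ on all of $F(X)$.
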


\begin{proof}
It is easy to see that $x\sim y$ if and only if $x\circ y^{-1}\in Soc(F(X))$ for every $x,y\in F(X)$, hence the result follows by the previous remark.
\end{proof}

\begin{theor}
Let $X$ be a regular q-cycle set and $\Ret(X)$ its retraction. Then, $X$ is non-degenerate if and only if $\Ret(X)$ is a non-degenerate q-cycle set. 
\end{theor}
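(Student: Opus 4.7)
The plan is to prove the two directions separately.

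For the forward direction, I would assume $X$ is non-degenerate, extend it to the free group $F(X)$ via \cref{teorump}, and use the preceding lemma to obtain the non-degenerate regular q-cycle set $\Ret(F(X))$. Then, I would introduce the natural map $[x]_X\mapsto [x]_{F(X)}$ from $\Ret(X)$ to $\Ret(F(X))$: by \cref{lemsig}, two elements of $X$ are equivalent in $X$ if and only if they are equivalent in $F(X)$, so this map is well-defined, injective, and an embedding of q-cycle sets. Its image is moreover $\mathcal{G}(\Ret(F(X)))$-invariant, since by \cref{prop-teorump}\,(1)--(2) the maps $\sigma_a$ and $\delta_a$ restrict to bijections of $X$ for every $a\in F(X)$, and hence the generators of $\mathcal{G}(\Ret(F(X)))$ and their inverses preserve the image. \cref{invar} then yields that $\Ret(X)$ is a non-degenerate q-cycle set.

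For the converse, assume $\Ret(X)$ is non-degenerate; since $X$ is already regular by hypothesis, it suffices to show that $\mathfrak{q}$ and $\mathfrak{q'}$ are bijective on $X$. Injectivity of $\mathfrak{q}$ is immediate: from $x\cdotp x=y\cdotp y$ one deduces $\mathfrak{q}_{\Ret(X)}([x])=\mathfrak{q}_{\Ret(X)}([y])$, which forces $[x]=[y]$, hence $\sigma_x=\sigma_y$; rewriting $y\cdotp x=\sigma_y(x)=\sigma_x(x)=x\cdotp x=y\cdotp y$ and invoking injectivity of $\sigma_y$ gives $x=y$. The argument for $\mathfrak{q'}$ is identical, with $\delta$ replacing $\sigma$.

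The delicate step is surjectivity. Given $x\in X$, surjectivity of $\mathfrak{q}_{\Ret(X)}$ produces $y\in X$ with $y\cdotp y\sim x$. The key observation is that, since $\Ret(X)$ is a q-cycle set, the induced bijection $\sigma_{[y]}$ on $\Ret(X)$ admits the explicit inverse $\sigma_{[y]}^{-1}([w])=[\sigma_y^{-1}(w)]$ for every $w\in X$; applying it to the equation $[y\cdotp y]=[x]$ yields $[y]=[\sigma_y^{-1}(x)]$. Therefore $z:=\sigma_y^{-1}(x)$ satisfies $z\sim y$, so $\sigma_z=\sigma_y$, and hence $z\cdotp z=\sigma_z(z)=\sigma_y(\sigma_y^{-1}(x))=x$. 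The analogous argument with $\delta$ handles $\mathfrak{q'}$. The main obstacle is exactly this surjectivity step, since $\Ret(X)$ only supplies a preimage up to $\sim$ and one must correct the chosen witness inside its own equivalence class; the correction succeeds because $\sigma_y$ descends to a bijection on the quotient, so its inverse can be lifted back to a genuine element of $X$.
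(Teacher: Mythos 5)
Your proposal is correct and follows essentially the same route as the paper: the forward direction embeds $\Ret(X)$ into $\Ret(F(X))$ as a $\mathcal{G}(\Ret(F(X)))$-invariant subset via \cref{teorump}, \cref{prop-teorump}, \cref{lemsig} and \cref{invar}, and the converse deduces injectivity of $\mathfrak{q}$ from that of $\mathfrak{q}_{\Ret(X)}$ and obtains surjectivity by correcting the witness $y$ to $\sigma_y^{-1}(x)$ inside its $\sim$-class. The paper's surjectivity step is phrased as $\bar{x}=\bar{y}\cdotp\overline{\sigma_y^{-1}(x)}$ forcing $\overline{\sigma_y^{-1}(x)}=\bar{y}$, which is exactly your lifting argument.
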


\begin{proof}
To avoid confusion, for any element $x\in X$ we denote by $\bar{x}$ its equivalence class with respect to the retract relation on $X$ and by $\tilde{x}$ its equivalence class with respect to the retract relation on $F(X)$.\\
Suppose that $X$ is non-degenerate.  Then, the function 
$$G:X\longrightarrow \Ret(F(X)), \ x\mapsto  \tilde{x}$$ 
is a homomorphism of q-cycle sets. Moreover, by \cref{lemsig}, we have that $G(x)=G(y)$ if and only if $\bar{x}=\bar{y}$, hence $\Ret(X)\cong G(X)$ as algebraic structures.\\
If $z\in X$ then 
$$\tilde{z}\cdotp \tilde{x}=\widetilde{z\cdotp x}\qquad \tilde{z}: \tilde{x}=\widetilde{z: x} $$

$$\widetilde{z^{-1}}\cdotp \tilde{x} = \widetilde{z^{-1}\cdotp x} = \widetilde{\sigma_z^{-1}(x)}$$ and 
$$\widetilde{z^{-1}}: \tilde{x}=\widetilde{z^{-1}: x} = \widetilde{\delta_z^{-1}(x)}$$
and these equalities imply that $G(X)$ is a $\mathcal{G}(\Ret(F(X)))$-invariant subset of $\Ret(F(X))$, hence, by Lemma \ref{invar}, it is a non-degenerate q-cycle set. Therefore, $\Ret(X)$ is a non-degenerate q-cycle set.\\
Conversely, suppose that $\Ret(X)$ is a non-degenerate q-cycle set. We have to show that the maps $\mathfrak{q}$ and $\mathfrak{q'}$ of $X$ are bijective. Let $x,y\in X$ such that $x\cdotp x=y\cdotp y$. Then, $\bar{x}\cdotp \bar{x}=\bar{y}\cdotp \bar{y}$ and, by the non-degeneracy of $\Ret(X)$, we have that $\bar{x}=\bar{y}$. Hence, $y\cdotp x=x\cdotp x=y\cdotp y$, therefore $x=y$ and the injectivity of $\mathfrak{q}$ follows. 
Moreover, if $x\in X$, there exists a unique $\bar{y}\in\Ret(X)$ such that $\bar{y}\cdotp\bar{y}=\bar{x}$. 
Since $\bar{x}=\bar{y}\cdotp \overline{\sigma^{-1}_y(x)}$, we have that $\overline{\sigma^{-1}_y(x)}=\bar{y}$, and hence
$$x=\sigma_y(\sigma_y^{-1}(x))=\sigma_{\sigma^{-1}_y(x)}(\sigma_y^{-1}(x))=\mathfrak{q}(\sigma_y^{-1}(x)) $$
therefore $\mathfrak{q}$ is bijective. In the same way, one can show that $\mathfrak{q'}$ is bijective.
\end{proof}
\medskip

If $X$ is a non-degenerate q-cycle set and $(X,r)$ is the associated solution, it is easy to see that $x\sim y$ if and only if $\lambda_x=\lambda_y$ and $\rho_x=\rho_y$. Indeed, if $x,y\in X$, 
\begin{eqnarray}
x\sim y &\Leftrightarrow & \sigma_x=\sigma_y \quad \text{and}\quad \delta_x=\delta_y \nonumber \\
&\Leftrightarrow & \sigma^{-1}_x=\sigma^{-1}_y \quad \text{and}\quad \delta_x\mathfrak{q}'=\delta_y\mathfrak{q}' \nonumber \\
&\Leftrightarrow & \sigma^{-1}_x=\sigma^{-1}_y \quad \text{and}\quad \mathfrak{q}'(\sigma_z^{-1}(x):z)=\mathfrak{q}'(\sigma_z^{-1}(y):z)\quad \forall z\in X \nonumber \\
&\Leftrightarrow & \lambda_x=\lambda_y \quad \text{and} \quad \rho_x=\rho_y .\nonumber
\end{eqnarray}
Hence, the previous theorem give us an alternative proof of \cite[Theorem 3.3]{JeP18}, in terms of solutions $(X,r)$ having bijective map $r$. Moreover, it shows a kind of converse for left non-degenerate bijective solutions: indeed, if $X$ retracts to a non-degenerate solution then so $X$ is.

\begin{cor}
Let $(X,r)$ be a bijective left non-degenerate solution of the Yang-Baxter equation and $\Ret(X) $ the retraction of the associated q-cycle set. If $(X,r)$ is non-degenerate then, for every $\bar{x}\in \Ret(X)$, we can define the function $\bar{\lambda}_{\bar{x}}$ by
$$\bar{\lambda}_{\bar{x}}:\Ret(X)\longrightarrow \Ret(X),\ \bar{y}\mapsto \overline{\lambda_x(y)}$$ 
for every $\bar{y}\in \Ret(X)$. Moreover, the pair $(\Ret(X),\bar{r})$, where $\bar{r}$ is given by 
$$\bar{r}(\bar{x},\bar{y}):=(\overline{\lambda_x(y)},\overline{\lambda_x(y):x}) $$
is a bijective non-degenerate solution of the Yang-Baxter equation.\\
Conversely, suppose that the function $\bar{\lambda}_{\bar{x}}$ is well defined for every $\bar{x}\in \Ret(X)$ and that the pair $(\Ret(X),\bar{r})$ defined as above is a non-degenerate bijective solution. Then, $(X,r)$ is non-degenerate.
\end{cor}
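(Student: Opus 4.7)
The plan is to transfer the preceding theorem about retractions of regular q-cycle sets across Rump's correspondence between bijective left non-degenerate solutions and regular q-cycle sets, relying on the characterization recorded immediately above the statement: $x\sim y$ if and only if $\lambda_x=\lambda_y$ and $\rho_x=\rho_y$. Throughout, I use the identities $\sigma_x=\lambda_x^{-1}$ and $x\cdotp y=\lambda_x^{-1}(y)$ coming from the correspondence.

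For the forward implication I would start by passing from $(X,r)$ to its associated q-cycle set $(X,\cdotp,:)$; since $(X,r)$ is a bijective non-degenerate solution, this q-cycle set is regular and non-degenerate. The preceding theorem then gives that $\Ret(X)$ is a non-degenerate (in particular regular) q-cycle set. By the correspondence in the reverse direction, $\Ret(X)$ gives rise to a bijective non-degenerate solution whose associated map is $\bar{s}(\bar{u},\bar{v})=(\bar{\sigma}_{\bar{u}}^{-1}(\bar{v}),\bar{\sigma}_{\bar{u}}^{-1}(\bar{v}):\bar{u})$. Since $\bar{\sigma}_{\bar{x}}(\bar{y})=\overline{x\cdotp y}$ is bijective on $\Ret(X)$, its inverse exists and satisfies $\bar{\sigma}_{\bar{x}}^{-1}(\bar{y})=\overline{\sigma_x^{-1}(y)}=\overline{\lambda_x(y)}$; this gives simultaneously the well-definedness of $\bar{\lambda}_{\bar{x}}$ and the identification of $\bar{s}$ with the map $\bar{r}$ in the statement.

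For the converse, I would observe that the q-cycle set associated to $(\Ret(X),\bar{r})$ is precisely the retraction $\Ret(X,\cdotp,:)$: indeed, the induced operations are $\bar{x}\cdotp\bar{y}=\bar{\lambda}_{\bar{x}}^{-1}(\bar{y})=\bar{\sigma}_{\bar{x}}(\bar{y})=\overline{x\cdotp y}$ and, analogously via the formula $\rho_y(x)=\lambda_x(y):x$, the operation $:$ on $\Ret(X)$ obtained from $\bar{r}$ coincides with the one induced from $X$. Hence the hypothesis that $(\Ret(X),\bar{r})$ is a bijective non-degenerate solution means exactly that $\Ret(X,\cdotp,:)$ is a non-degenerate q-cycle set. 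Applying the other direction of the preceding theorem, $(X,\cdotp,:)$ is non-degenerate, which is equivalent to $(X,r)$ being non-degenerate.

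The only real work is the dictionary between $(\lambda,\rho)$ and $(\sigma,\delta)$ on the quotient, together with checking that the $\bar{r}$-induced q-cycle set structure on $\Ret(X)$ coincides with the quotient of the original q-cycle set structure; both are immediate from the formulas defining the correspondence and from the equivalent description of $\sim$ recorded just before the statement, so no substantial new obstacle arises beyond this translation.
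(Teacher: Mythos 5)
Your proposal is correct and follows essentially the route the paper intends: the corollary is the translation, via Rump's correspondence between bijective left non-degenerate solutions and regular q-cycle sets, of the preceding theorem on $\Ret(X)$, using the equivalence $x\sim y\Leftrightarrow\lambda_x=\lambda_y$ and $\rho_x=\rho_y$ recorded just before the statement (the paper in fact leaves this dictionary-checking implicit and gives no separate proof). Your verification that $\bar{\lambda}_{\bar{x}}=\bar{\sigma}_{\bar{x}}^{-1}$ and that the $\bar{r}$-induced q-cycle set structure on $\Ret(X)$ is the quotient structure is exactly the content that needs to be supplied.
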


\section{Dynamical extensions of q-cycle sets}

Inspired by the extensions of cycle sets and racks, introduced by Vendramin in \cite{vendramin2016extensions} and 
Andruskiewitsch and Gra\~{n}a in \cite{andruskiewitsch2003racks},
and the dynamical cocycles of biracks \cite{nelson2013birack}, introduced by Nelson and Watterberg, in this section we develop a theory of dynamical extensions of q-cycle sets.

\begin{theor}\label{lemm1}
Let $(X,\cdotp,:)$ be a q-cycle set, $S$ a set, $\alpha:X\times X\times S\longrightarrow\Sym(S)$ and $\alpha':X\times X\times S\longrightarrow S^S$ maps such that
\begin{equation}\label{ugd1}
\alpha_{(x\cdotp y),(x\cdotp z)}(\alpha_{(x,y)}(s,t),\alpha_{(x,z)}(s,u))=\alpha_{(y:x),(y\cdotp z)}(\alpha'_{(y,x)}(t,s),\alpha_{(y,z)}(t,u))
\end{equation}
\begin{equation}\label{ugd2}
\alpha'_{(x: y),(x: z)}(\alpha'_{(x,y)}(s,t),\alpha'_{(x,z)}(s,u))=\alpha'_{(y\cdotp x),(y:z)}(\alpha_{(y,x)}(t,s),\alpha'_{(y,z)}(t,u))
\end{equation}
\begin{equation}\label{ugd3}
\alpha'_{(x\cdotp y),(x\cdotp z)}(\alpha_{(x,y)}(s,t),\alpha_{(x,z)}(s,u))=\alpha_{(y: x),(y: z)}(\alpha'_{(y,x)}(t,s),\alpha'_{(y,z)}(t,u))
\end{equation}
hold, for all $x,y,z\in X$, $s,t,u\in S$. Then, the triple $(X\times S,\cdotp,:)$ where
$$(x,s)\cdotp (y,t):=(x\cdotp y,\alpha_{(x,y)}(s,t)) $$
$$(x,s): (y,t):=(x: y,\alpha'_{(x,y)}(s,t)), $$
for all $x,y\in X$ and $s,t\in S$, is a q-cycle set. Moreover, $(X\times S,\cdotp,:)$ is regular if and only if $\alpha'_{(x,y)}(s,-)\in\Sym(S)$, for all $x,y\in X$ and $s\in S$.
\end{theor}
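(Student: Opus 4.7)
The plan is to verify the q-cycle set axioms for $(X\times S,\cdotp,:)$ by splitting each condition into its $X$-coordinate and its $S$-coordinate, since both new operations are defined componentwise. The $X$-coordinate identities come for free from $(X,\cdotp,:)$ being a q-cycle set, while the $S$-coordinate identities are exactly the hypotheses \eqref{ugd1}--\eqref{ugd3}. First I would handle bijectivity of $\sigma_{(x,s)}$: given $(z,u)\in X\times S$, I would use bijectivity of $\sigma_x$ to recover the unique $y\in X$ with $x\cdotp y=z$, and then, since $\alpha_{(x,y)}(s,-)\in\Sym(S)$ by hypothesis on $\alpha$, there is a unique $t\in S$ with $\alpha_{(x,y)}(s,t)=u$; hence $\sigma_{(x,s)}$ is a bijection on $X\times S$.

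Next I would verify \eqref{ug1}--\eqref{ug3} for the extension by direct expansion. For example, the left-hand side of \eqref{ug1} evaluates to
\[
\bigl((x\cdotp y)\cdotp(x\cdotp z),\ \alpha_{(x\cdotp y,\,x\cdotp z)}(\alpha_{(x,y)}(s,t),\,\alpha_{(x,z)}(s,u))\bigr),
\]
while the right-hand side evaluates to
\[
\bigl((y:x)\cdotp(y\cdotp z),\ \alpha_{(y:x,\,y\cdotp z)}(\alpha'_{(y,x)}(t,s),\,\alpha_{(y,z)}(t,u))\bigr).
\]
Equality of the first coordinates is \eqref{ug1} applied in $X$, and equality of the second coordinates is exactly \eqref{ugd1}. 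The remaining axioms are dispatched identically, pairing \eqref{ug2} with \eqref{ugd2} and \eqref{ug3} with \eqref{ugd3}.

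Finally, for the regularity claim, note that $\delta_{(x,s)}(y,t)=(x:y,\alpha'_{(x,y)}(s,t))$, so bijectivity of $\delta_{(x,s)}$ on $X\times S$ is equivalent to bijectivity of $\delta_x$ on $X$ (which forces $X$ itself to be regular, the intended reading of the ``iff'') together with bijectivity of each $\alpha'_{(x,y)}(s,-)$ on $S$; the ``only if'' direction follows by projecting onto the first coordinate and tracking that if $\delta_x$ failed to be surjective some fibre $\{z\}\times S$ would be missed, while the ``if'' direction follows by first solving $x:y=z$ uniquely and then inverting $\alpha'_{(x,y)}(s,-)$, exactly as for $\sigma_{(x,s)}$. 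There is no serious obstacle here: the conditions \eqref{ugd1}--\eqref{ugd3} have been engineered so that the $S$-coordinates of the q-cycle set axioms match automatically, and the entire verification amounts to careful bookkeeping with the nested indices.
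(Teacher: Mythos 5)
Your proof is correct and follows essentially the same route as the paper's: expand both operations componentwise, observe that the first coordinates satisfy \eqref{ug1}--\eqref{ug3} because $X$ is a q-cycle set, and that the second coordinates satisfy them precisely by \eqref{ugd1}--\eqref{ugd3}. You additionally spell out the bijectivity of $\sigma_{(x,s)}$ and the regularity equivalence (correctly noting that the stated ``iff'' tacitly requires $\delta_x$ to be bijective on $X$), which the paper dismisses as a straightforward calculation.
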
 

\begin{proof}
Since 
$$((x,s)\cdotp (y,t))\cdotp((x,s)\cdotp (z,u))=((x\cdotp y)\cdotp (x\cdotp z),\alpha_{(x\cdotp y),(x\cdotp z)}(\alpha_{(x,y)}(s,t),\alpha_{(x,z)}(s,u))) $$
and 
$$((y,t): (x,s))\cdotp((y,t)\cdotp (z,u)) =((y:x)\cdotp (y\cdotp z),\alpha_{(y:x),(y\cdotp z)}(\alpha'_{(y,x)}(t,s),\alpha_{(y,z)}(t,u))),$$
for all $x,y,z\in X$, $s,t,u\in S$, we have that \eqref{ugd1} is equivalent to \eqref{ug1}. In the same way one can show that \eqref{ugd2} and \eqref{ugd3} are equivalent to \eqref{ug2} and \eqref{ug3}. The rest of the proof is a straightforward calculation.
\end{proof}

\begin{defin}
We call the pair $(\alpha,\alpha')$ \textit{dynamical pair} and we call the q-cycle set $X\times_{\alpha,\alpha'} S:=(X\times S,\cdotp,:)$ \textit{dynamical extension} of $X$ by $S$.
\end{defin}

Clearly, every dynamical extension of a cycle set \cite{vendramin2016extensions} is a dynamical extension of a q-cycle set for which $\alpha=\alpha'$.\\
If $X$ and $S$ are q-cycle sets and
\begin{align*}
    \alpha_{(x,y)}(s,t) = \alpha'_{(x,y)}(s,t) := t
\end{align*}
for all $x,y\in X$, $s,t\in S$, $X\times_{\alpha,\alpha'} S$, is a dynamical extension which we call \textit{trivial dynamical extension} of $X$ by $S$. The goal of the next section is to provide further examples of dynamical extensions.

\begin{defin}
Let $X$ and $Y$ be q-cycle sets. A homomorphism $p:X\longrightarrow Y$ is called \textit{covering map} if it is surjective and all the fibers 
\begin{align*}
   p^{-1}(y):=\{x|x\in X,\hspace{1mm} p(x)=y \} 
\end{align*}
have the same cardinality. Moreover, a q-cycle set $X$ is \textit{simple} if, for every covering map $p:X\longrightarrow Y$, we have that $|Y|=1$ or $p$ is an isomorphism.
\end{defin}

\begin{exs}\hspace{1mm}
\begin{itemize}
\item[1)] Every q-cycle set having a prime number of elements is simple.
\item[2)] Let $X:=\{0,1,2,3\}$ be the q-cycle set given by
$$\sigma_0:=(1\quad 3)\qquad \sigma_1:=(0\quad3)\qquad \sigma_2:=(0\quad 1 \quad 3)\qquad \sigma_3:=(0\quad1) $$
$$\delta_0=\delta_1=\delta_3:=id_X\qquad \delta_2:=(0\quad 1\quad 3).$$
Suppose that $p:X\longrightarrow Y$ is a covering map. Hence, $Y$ has necessarily two elements. Moreover, since $p(0)\cdotp p(0) = p(0):p(0) = p(0)$, $Y$ is the q-cycle set given by $\sigma_y = \delta_y= \id_Y$, for every $y\in Y$.
This fact implies that $X$ has a subset $I$ of $2$ elements  such that $\sigma_x(I)=\delta_x(I)=I$, for every $x\in X$, but this is a contradiction. Then, $X$ is simple.
\end{itemize}
\end{exs}
\medskip

The following theorem, that is a ``q-version'' of \cite[Theorem 2.12]{vendramin2016extensions}, states that every non-simple q-cycle set can be found as a dynamical extension of a smaller q-cycle set.

\begin{theor}
Let $X$ and $Y$ be q-cycle sets and suppose that $p:Y\longrightarrow X$ is a covering map. Then, there exist a set $S$ and a dynamical pair $(\alpha,\alpha')$ such that $Y\cong X\times_{\alpha,\alpha'} S$.
\end{theor}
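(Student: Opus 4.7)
The plan is to build the set $S$ and the dynamical pair $(\alpha,\alpha')$ by transporting the q-cycle set structure of $Y$ along a fiber-wise identification with $X\times S$. First, since $p$ is a covering map, all fibers $p^{-1}(x)$ have a common cardinality, so I choose a set $S$ with $|S|=|p^{-1}(x)|$ for every $x\in X$ and, for each $x\in X$, a bijection $\phi_x\colon p^{-1}(x)\to S$. These assemble into a bijection
\[
\Phi\colon Y\longrightarrow X\times S,\qquad \Phi(y):=\bigl(p(y),\phi_{p(y)}(y)\bigr).
\]
The aim is to equip $X\times S$ with operations of the form prescribed in \cref{lemm1} and show that $\Phi$ becomes an isomorphism of q-cycle sets.

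Next, for each $(x_1,x_2)\in X\times X$ and $(s,t)\in S\times S$, set $y_1:=\phi_{x_1}^{-1}(s)$ and $y_2:=\phi_{x_2}^{-1}(t)$, and define
\[
\alpha_{(x_1,x_2)}(s,t):=\phi_{x_1\cdotp x_2}(y_1\cdotp y_2),\qquad
\alpha'_{(x_1,x_2)}(s,t):=\phi_{x_1:x_2}(y_1:y_2).
\]
These are well-defined because $p$ is a homomorphism, so $p(y_1\cdotp y_2)=x_1\cdotp x_2$ and $p(y_1:y_2)=x_1:x_2$. Moreover, since $\sigma_{y_1}\in\Sym(Y)$ restricts (by the homomorphism property combined with the equal fiber cardinalities) to a bijection $p^{-1}(x_2)\to p^{-1}(x_1\cdotp x_2)$, the map $\alpha_{(x_1,x_2)}(s,-)=\phi_{x_1\cdotp x_2}\circ\sigma_{y_1}\circ\phi_{x_2}^{-1}$ lies in $\Sym(S)$, so the hypothesis of \cref{lemm1} for $\alpha$ is satisfied.

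Now I verify that $(\alpha,\alpha')$ is a dynamical pair, i.e.\ that \eqref{ugd1}, \eqref{ugd2}, \eqref{ugd3} hold. Fix $x,y,z\in X$ and $s,t,u\in S$, and let $y_1:=\phi_x^{-1}(s)$, $y_2:=\phi_y^{-1}(t)$, $y_3:=\phi_z^{-1}(u)$. Unwinding the definitions, the left-hand side of \eqref{ugd1} equals $\phi_{(x\cdotp y)\cdotp(x\cdotp z)}\bigl((y_1\cdotp y_2)\cdotp(y_1\cdotp y_3)\bigr)$, while the right-hand side equals $\phi_{(y:x)\cdotp(y\cdotp z)}\bigl((y_2:y_1)\cdotp(y_2\cdotp y_3)\bigr)$. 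By \eqref{ug1} applied in $Y$ the two arguments of $\phi$ coincide, and by \eqref{ug1} applied in $X$ the two subscripts coincide, so both sides agree. The verifications of \eqref{ugd2} and \eqref{ugd3} are entirely analogous, using \eqref{ug2} and \eqref{ug3} in $X$ and in $Y$ respectively.

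Finally, by construction the operations on $X\times_{\alpha,\alpha'}S$ satisfy
\[
\Phi(y_1\cdotp y_2)=\bigl(p(y_1)\cdotp p(y_2),\,\alpha_{(p(y_1),p(y_2))}(\phi_{p(y_1)}(y_1),\phi_{p(y_2)}(y_2))\bigr)=\Phi(y_1)\cdotp\Phi(y_2),
\]
and similarly for $:$, so $\Phi$ is an isomorphism of q-cycle sets, proving $Y\cong X\times_{\alpha,\alpha'}S$. The only real subtlety is the first step of the middle paragraph: one must observe that the q-cycle set axioms on $Y$ force $\sigma_{y_1}$ to send fibers to fibers bijectively, which is exactly what guarantees that $\alpha$ takes values in $\Sym(S)$; everything else is bookkeeping transported from $Y$ via $\Phi$.
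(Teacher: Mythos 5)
Your proof is correct and follows essentially the same route as the paper's: choose fiber-wise bijections onto a common set $S$, transport the two operations of $Y$ through them to define $(\alpha,\alpha')$, verify \eqref{ugd1}--\eqref{ugd3} by pushing \eqref{ug1}--\eqref{ug3} through those bijections, and check that the assembled map $Y\to X\times_{\alpha,\alpha'}S$ is an isomorphism. One minor point: for infinite fibers, the surjectivity of $\sigma_{y_1}\colon p^{-1}(x_2)\to p^{-1}(x_1\cdotp x_2)$ does not follow from equal cardinalities plus injectivity alone, but it does follow from the bijectivity of $\sigma_{y_1}$ on all of $Y$ together with the injectivity of $\sigma_{x_1}$ on $X$, so your conclusion that $\alpha$ takes values in $\Sym(S)$ stands (a verification the paper's own proof leaves implicit).
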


\begin{proof}
Since all the fibers $p^{-1}(x)$ are equipotent, there exist a set $S$ and a bijection $f_x:p^{-1}(x)\longrightarrow S$. Let $\alpha:X\times X\times S\longrightarrow\Sym(S)$ and $\alpha':X\times X\times S\longrightarrow S^S$ given by
$$ \alpha_{(x,z)}(s,t):=f_{x\cdotp z}(f^{-1}_x(s)\cdotp f^{-1}_z(t))$$
and
$$ \alpha'_{(x,z)}(s,t):=f_{x: z}(f^{-1}_x(s): f^{-1}_z(t))$$
for all $x,z\in X$, $s,t\in S$. Then,
\begin{eqnarray}
 &&\alpha_{(x\cdot y,x\cdot z)}(\alpha_{(x,y)}(r,s),\alpha_{(x,z)}(r,t))\quad= \nonumber \\
&=& f_{(x\cdotp y)\cdotp (x\cdotp z)}(f^{-1}_{x\cdotp y}(\alpha_{(x,y)}(r,s))\cdotp f^{-1}_{x\cdotp z}(\alpha_{(x,z)}(r,t)))\nonumber \\
&=& f_{(x\cdotp y)\cdotp (x\cdotp z)}(f^{-1}_{x\cdotp y}(f_{x\cdotp y}(f^{-1}_x(r)\cdotp f^{-1}_y(s)) \cdotp f^{-1}_{x\cdotp z}(f_{x\cdotp z}(f^{-1}_x(r)\cdotp f^{-1}_z(t))\nonumber \\
&=& f_{(x\cdotp y)\cdotp (x\cdotp z)}((f^{-1}_x(r)\cdotp f^{-1}_y(s))\cdotp (f^{-1}_x(r)\cdotp f^{-1}_z(t))\nonumber \\
&=& f_{(y: x)\cdotp (y\cdotp z)}((f^{-1}_y(s): f^{-1}_x(r))\cdotp (f^{-1}_y(s)\cdotp f^{-1}_z(t))\nonumber \\
&=& f_{(y: x)\cdotp (y\cdotp z)}(f^{-1}_{y: x}(f_{y: x}(f^{-1}_y(s): f^{-1}_x(r)) \cdotp f^{-1}_{y\cdotp z}(f_{y\cdotp z}(f^{-1}_y(s)\cdotp f^{-1}_z(t))\nonumber \\
&=& \alpha_{(y: x,y\cdot z)}(\alpha'_{(y,x)}(s,r),\alpha_{(y,z)}(s,t)),\nonumber
\end{eqnarray}
for all $x,y,z\in X$ and $r,s,t\in S$, hence \eqref{ugd1} holds. In the same way, one can check that the pair $(\alpha,\alpha')$ verifies \eqref{ugd2} and \eqref{ugd3}, hence, by \cref{lemm1}, $X\times_{\alpha,\alpha'} S$ is a q-cycle set.\\
The map $\phi:Y\longrightarrow X\times_{\alpha,\alpha'} S$, $y\mapsto (p(y),f_{p(y)}(y))$ is a bijective homomorphism. Indeed, 
\begin{eqnarray}
\phi(y\cdotp z)&=&(p(y\cdotp z),f_{p(y\cdotp z)}(y\cdotp z)) \nonumber \\
&=& (p(y)\cdotp p(z),f_{p(y)\cdotp p(z)}(f^{-1}_{p(y)}(f_{p(y)}(y))\cdotp f^{-1}_{p(z)}(f_{p(z)}(z)))) \nonumber \\
&=& (p(y)\cdotp p(z),\alpha_{(p(y),p(z))}(f_{p(y)}(y),f_{p(z)}(z)))\nonumber \\
&=& (p(y),f_{p(y)}(y))\cdotp (p(z),f_{p(z)}(z))\nonumber \\
&=& \phi(y)\cdotp \phi(z),\nonumber
\end{eqnarray}
for all $y,z\in Y$. In a similar way one can show that $\phi(y: z)=\phi(y):\phi(z)$, for all $y,z\in Y$. By the surjectivity of $p$ and the bijectivity of the maps $f_y$ the bijectivity of $\phi$ also follows, hence the thesis.
\end{proof}

\section{Examples and constructions}
In this section, we collect several both known and new examples of dynamical extension of q-cycle sets. Finally, we introduce a new construction of q-cycle sets, the \emph{semidirect product}. 
\medskip

Some dynamical cocycles of biracks founded in \cite{bardakov2019general,nelson2013birack,horvat2018constructing} provide dynamical extensions of q-cycle sets (one can see this fact by a long but easy calculation). Using non-degenerate cycle sets, in \cite{vendramin2016extensions,cacsp2017,bachiller2015family} several examples of dynamical extensions were obtained. 

We start by giving an example of a dynamical extension of a degenerate cycle set that is similar to those obtained in \cite{cacsp2017} and \cite{bachiller2015family}.

\begin{ex}
Let $X$ be the cycle set on $\mathbb{Z}$ given by 
$x\cdotp y:=y-min\{0,x\}$,
for all $x,y\in X$, (see \cite[Example 1]{rump2005decomposition}), and let $S$ be the Klein group $\mathbb{Z}/2\mathbb{Z}\times\mathbb{Z}/2\mathbb{Z}$. Let $\alpha:X\times X\times S\longrightarrow \Sym(S)$ be the function given by
$$
\alpha_{(i,j)}((a,b),(c,d)):=
\begin{cases}
(c,d-(a-c))&\mbox{ if }i = j,\\
(c-b,d)&\mbox{ if }i\neq j
\end{cases}
$$
for all $i,j\in X$, $(a,b),(c,d)\in S$, and set $\alpha':=\alpha$. Then, $X\times_{\alpha,\alpha'}S$ is a dynamical extension of $X$ by $S$.
\end{ex}

By an easy calculation one can see that the previous example is an irretractable cycle set. Moreover, it is degenerate: indeed, $\mathfrak{q}(-2,0,0)$ is equal to $\mathfrak{q}(-1,0,0)$. Recently, Ced\'o, Jespers and Verwimp \cite{cedo2019structure} studied the links between non-degeneracy and irretractability of cycle sets (note that they used the language of set-theoretic solutions). As mentioned in \cite[p. 12]{cedo2019structure}, they did not know whether there exist  examples of irretractable degenerate cycle sets: in this direction, the previous example answers in the positive sense.
\medskip

Inspired by the dynamical extensions provided in \cite{bachiller2015family}, in the following example we give a family of dynamical extensions that determine new q-cycle sets that are not cycle sets.
\begin{ex}
Let $X$ be the cycle set given by $x\cdotp y=y$ for every $x,y\in X$, and $G$ an abelian group. Let $\alpha,\alpha':X\times X\times \left(G\times G\right)\longrightarrow \Sym(G\times G)$ be the maps given by
$$
\alpha_{(x,y)}((s_1,s_2),(t_1,t_2)):=
\begin{cases}
(t_1+t_2-s_2,t_2) &\mbox{ if } x=y\\
(t_1,t_2+s_1)     &\mbox{ if } x\neq y
\end{cases}
$$
and
$$
\alpha'_{(x,y)}((s_1,s_2),(t_1,t_2)):=
\begin{cases}
(t_1-t_2+s_2,t_2) &\mbox{ if } x=y\\
(t_1,t_2+s_1)    &\mbox{ if } x\neq y
\end{cases}
$$
for all $x,y\in X$, $s_1,s_2,t_1,t_2\in G$. Then $(\alpha,\alpha')$ is a dynamical pair and hence $X\times_{\alpha,\alpha'}(G\times G)$ is a q-cycle set.
\end{ex}

\begin{ex}
    Let $B$ be a group, $f$ an endomorphism of $B$ and $\left(B, \cdotp, :\right)$ the q-cycle in \cref{ex:semi-brace} where $x\cdotp y = x^{-1}yf\left(x\right)$ and $x:y = f\left(y\right)$, for all $x,y\in B$.
    Let $S = B$, $\alpha:B\times B\times S\to \Sym\left(S\right)$ and $\alpha':B\times B\times S\to S^S$ be the maps defined by 
\begin{align*}
    \alpha_{(x,y)}(s,t)  &:= x^{-1}t
    \\
    \alpha'_{(x,y)}(s,t) &:= f\left(t\right),
\end{align*}
for all $x,y,s,t\in B$. Then, it is a routine computation to verify that $\left(\alpha, \alpha'\right)$ is a dynamical pair. 
Thus, the dynamical extension $B\times_{\alpha, \alpha'} S$ of $B$ by $S$ is such that
\begin{align*}
    (x,s)\cdotp (y,t)&= (x^{-1}yf(x), \  x^{-1}t)\\
    (x,s): (y,t) &= (f(y), \ f(t)),
\end{align*}
for all $x,y,s,t\in B$. 
Let us observe that the q-cycle set $B\times_{\alpha, \alpha'} S$ is regular if and only if $f$ is bijective.
Moreover, the left non-degenerate solution associated to $B\times_{\alpha, \alpha'} S$ is given by
\begin{align*}
    r\left(\left(x,s\right), \left(y,t\right)\right)
    = \left(\left(xyf(x)^{-1}, \ xt\right),  \left(f(x), \ f(s)\right)\right),
\end{align*}
for all $x,y,s,t\in B$.
\end{ex}

\begin{ex}
Let $X$ be a left quasi-normal semigroup and $\left(X, \cdotp, :\right)$ the q-cycle set in $2)$ of \cref{ex:left_quasi-normal} where $x\cdotp y= y$ and $x : y = yx$, for all $x,y\in X$. Let $S = X$, $\alpha:X\times X\times S\to \Sym\left(S\right)$ and $\alpha':X\times X\times S\to S^S$ be the maps defined by 
\begin{align*}
    \alpha_{(x,y)}(s,t)  &= t 
    \\
    \alpha'_{(x,y)}(s,t) &= tx,
\end{align*}
for all $x,y,s,t\in X$. Then, $\left(\alpha, \alpha'\right)$ is a dynamical pair. In fact, clearly \eqref{ugd1} is satisfied. Moreover, if $x,y,s,t\in X$, we have that
\begin{align*}
&\alpha'_{(x: y),(x: z)}(\alpha'_{(x,y)}(s,t),\alpha'_{(x,z)}(s,u))
=  \alpha'_{(yx,zx)}(tx, ux)
= uxyx 
= uyx\\
&= \alpha'_{(x,zy)}(s ,uy)
= \alpha'_{(y\cdotp x),(y:z)}(\alpha_{(y,x)}(t,s),\alpha'_{(y,z)}(t,u))
\end{align*}
and
\begin{align*}
    &\alpha'_{(x\cdotp y),(x\cdotp z)}(\alpha_{(x,y)}(s,t),\alpha_{(x,z)}(s,u))
    = \alpha'_{(y,z)}(t,u) = uy
    = \alpha'_{(y,z)}(t,u)\\
    &= \alpha_{(y: x),(y: z)}(\alpha'_{(y,x)}(t,s),\alpha'_{(y,z)}(t,u)),
\end{align*}
i.e., \eqref{ugd2} and \eqref{ugd3} hold. 
Therefore, the dynamical extension $X\times_{\alpha, \alpha'} S$ of $X$ by $S$ is such that
\begin{align*}
    (x,s)\cdotp (y,t)&= (y, \ t)\\
    (x,s): (y,t) &= (yx, \ tx),
\end{align*}
for all $x,y,s,t\in X$. Note that $X\times_{\alpha, \alpha'} S$ in general is not regular.
Moreover, the left non-degenerate solution associated to the q-cycle set $X\times_{\alpha, \alpha'} S$ is given by
\begin{align*}
    r\left(\left(x,s\right), \left(y,t\right)\right)
    = \left(\left(y,t\right), \left(xy,sy\right)\right).
\end{align*}
Let us observe that this solution satisfies the property $r^5=r^3$, just like for the solution associated to the q-cycle set $\left(X,\cdotp,:\right)$. 
\end{ex}

The following example is the analogue of the abelian extension of cycle sets studied by Lebed and Vendramin \cite{lebed2017homology}. 

\begin{ex}
Let $X$ be a cycle set, $A$ an abelian group, $a,b,a',b'\in A$ and $f,f':X\times X\longrightarrow A$ given by 
$$
f(x,y):=
\begin{cases}
a &\mbox{ if } x=y\\
b &\mbox{ if } x\neq y
\end{cases}
$$
and
$$
f'(x,y):=
\begin{cases}
a' &\mbox{ if } x=y\\
b' &\mbox{ if } x\neq y
\end{cases}
$$
for all $x,y\in X$. Define on $X\times A$ the function $\alpha,\alpha':X\times X\times A\longrightarrow\Sym(A)$ given by
$$\alpha_{(x,y)}(s,t)=t+f(x,y) $$
and
$$\alpha'_{(x,y)}(s,t)=t+f'(x,y), $$
for all $x,y\in X$. Then, $(\alpha,\alpha')$ is a dynamical pair and hence $X\times_{\alpha,\alpha'} A$ is a q-cycle set. In particular, we have that
$$(x,s)\cdotp (y,t)=(x\cdotp y,t+f(x,y)) $$
$$(x,s): (y,t)=(x\cdotp y,t+f'(x,y)), $$
for all $x,y\in X$. 
\end{ex}

A particular family of dynamical extensions allow us to define a kind of semidirect product of q-cycle sets. Since this construction is similar to the ones of other algebraic structures, for the convenience of the reader we write the two operations $\cdotp$ and $:$ of the q-cycle set without using the dynamical pair. 

\begin{prop}
Let $X,S$ be q-cycle sets, $\theta:X\longrightarrow \Aut(S), x\mapsto \theta_x$ such that 
\begin{align*}
    \theta_{x\cdotp y}\theta_x =\theta_{y:x}\theta_{y},
\end{align*}
for all $x,y\in X$. Define on $X\times S$ the operations $\cdotp$ and $:$ by
$$(x,s)\cdotp (y,t):=(x\cdotp y,\theta_{x\cdotp y}(s)\cdotp \theta_{y: x}(t)) $$
$$(x,s): (y,t):=(x: y,\theta_{x: y}(s): \theta_{y\cdotp x}(t)) $$
for all $x,y\in X$, $s,t\in S$. Then, $(X\times S,\cdotp,:)$ is a q-cycle set which we call the \textit{semidirect product} of $X$ and $S$.
\end{prop}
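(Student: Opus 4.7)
The plan is to realize this semidirect product as a particular dynamical extension in the sense of \cref{lemm1} and then invoke that theorem. Define
\[
\alpha_{(x,y)}(s,t):=\theta_{x\cdotp y}(s)\cdotp \theta_{y:x}(t),\qquad
\alpha'_{(x,y)}(s,t):=\theta_{x:y}(s):\theta_{y\cdotp x}(t),
\]
so that the two operations proposed on $X\times S$ coincide with those of the dynamical extension $X\times_{\alpha,\alpha'} S$. A first easy check is that $\alpha_{(x,y)}(s,-)\in\Sym(S)$: since $\theta_{y:x}\in\Aut(S)$ and $\sigma^S_{\theta_{x\cdotp y}(s)}\in\Sym(S)$ (because $S$ is a q-cycle set), their composition is a bijection of $S$; in particular the map $(y,t)\mapsto (x,s)\cdotp(y,t)$ is bijective on $X\times S$, using also that $\sigma^X_x\in\Sym(X)$.

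The heart of the proof is the verification of the three dynamical-pair identities \eqref{ugd1}, \eqref{ugd2}, \eqref{ugd3}. In each case I would expand both sides after substituting the definitions of $\alpha,\alpha'$, then exploit two structural ingredients. First, each $\theta_w$ is an \emph{automorphism} of $(S,\cdotp,:)$, so it distributes across the two operations; this lets the outermost $\theta$ on each side be pulled outside of the inner $\cdotp$ and $:$. Second, the axioms \eqref{ug1}, \eqref{ug2}, \eqref{ug3} in $X$ guarantee that the indices of the outermost $\theta$'s on the two sides coincide (e.g.\ $(x\cdotp y)\cdotp(x\cdotp z)=(y:x)\cdotp(y\cdotp z)$ for \eqref{ugd1}, and similarly for the other two). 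After this alignment, one still has compositions such as $\theta_{(x\cdotp y)\cdotp(x\cdotp z)}\theta_{x\cdotp y}$ on the left and $\theta_{(y:x)\cdotp(y\cdotp z)}\theta_{y:x}$ on the right, which are identified by the hypothesis $\theta_{a\cdotp b}\theta_a=\theta_{b:a}\theta_b$ applied to the pair $(a,b)=(x\cdotp y,x\cdotp z)$. Once all such inner pairs are identified (with analogous applications of the hypothesis for the $:$\,-\,side), both expressions reduce to identities of the form $(A\cdotp B)\cdotp(A\cdotp C)=(B:A)\cdotp(B\cdotp C)$ and the $:$\,-\,analogues, which hold in $S$ by \eqref{ug1}--\eqref{ug3}.

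The step I expect to be most delicate is precisely the bookkeeping in the previous paragraph: ensuring that every $\theta$ appearing on one side can be rewritten, via a chain of applications of the cocycle-type hypothesis, into the form in which it appears on the other side, without accidentally assuming extra identities on $\theta$. The combinatorial core is that the hypothesis $\theta_{x\cdotp y}\theta_x=\theta_{y:x}\theta_y$ is exactly the relation needed to make $\theta$ behave coherently with the symmetry underlying \eqref{ug1} (and the dual relations derived by permuting the roles of $\cdotp$ and $:$ suffice for \eqref{ugd2} and \eqref{ugd3}). Since \cref{lemm1} already reduces the q-cycle set axioms on $X\times S$ to the three conditions on $(\alpha,\alpha')$, this finishes the proof.
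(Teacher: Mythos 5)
Your proposal is correct and takes essentially the same approach as the paper: the paper itself observes that the semidirect product is a particular dynamical extension but, ``for the convenience of the reader'', verifies \eqref{ug1}--\eqref{ug3} directly on the product operations rather than through \cref{lemm1}, and its computation is exactly the one you outline (distribute the automorphisms $\theta_w$ over $\cdotp$ and $:$, align the outer indices using \eqref{ug1}--\eqref{ug3} in $X$, apply the hypothesis $\theta_{a\cdotp b}\theta_a=\theta_{b:a}\theta_b$ to pairs such as $(x\cdotp y,x\cdotp z)$ and $(z:x,z:y)$, then conclude by the axioms in $S$). The bijectivity of $\sigma_{(x,s)}$ is handled there just as you indicate, from the bijectivity of $\sigma_x$, of $\theta_{y:x}$, and of the left multiplications of $S$.
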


\begin{proof}
Let $(x,s),(y,t),(z,u)\in X\times S$. Then
\begin{eqnarray}
& &((x,s)\cdotp (y,t))\cdotp ((x,s)\cdotp (z,u)) \nonumber \\
&=& (x\cdotp y,\theta_{x\cdotp y}(s)\cdotp \theta_{y: x}(t))\cdotp (x\cdotp z,\theta_{x\cdotp z}(s)\cdotp \theta_{z: x}(u)) \nonumber \\
&=& ((x\cdotp y)\cdotp (x\cdotp z),\theta_{(x\cdotp y)\cdotp (x\cdotp z)}(\theta_{x\cdotp y}(s)\cdotp \theta_{y: x}(t))\cdotp \theta_{(x\cdotp z):(x\cdotp y)}(\theta_{x\cdotp z}(s)\cdotp \theta_{z: x}(u)) \nonumber 
\end{eqnarray}
and, thanks to the hypothesis, we obtain that 
{\small\begin{align}\label{eq:1}
    &\theta_{(x\cdotp y)\cdotp (x\cdotp z)}(\theta_{x\cdotp y}(s)\cdotp \theta_{y: x}(t))\cdotp \theta_{(x\cdotp z):(x\cdotp y)}(\theta_{x\cdotp z}(s)\cdotp \theta_{z: x}(u))\\  &= (\theta_{(y:x)\cdotp (y\cdotp z)}(\theta_{y: x}(t)): \theta_{(x\cdotp y)\cdotp (x\cdotp z)}(\theta_{x\cdotp y}(s)))\cdotp (\theta_{(y:x)\cdotp (y\cdotp z)}(\theta_{y: x}(t))\cdotp \theta_{(z:y):(z:x)}(\theta_{z: y}(u)))\nonumber
\end{align}}
and
{\small\begin{align}\label{eq:2}
&\theta_{(y:x)\cdotp (y\cdotp z)}(\theta_{y:x}(t): \theta_{x\cdotp y}(s)))\cdotp \theta_{(y\cdotp z): (y:x)}(\theta_{y\cdotp z}(t)\cdotp \theta_{z: y}(u)\\  
&= (\theta_{(y:x)\cdotp (y\cdotp z)}(\theta_{y:x}(t)):\theta_{(x\cdotp y)\cdotp (x\cdotp z)}(\theta_{x\cdotp y}(s)))\cdotp (\theta_{(y:x)\cdotp (y\cdotp z)}(\theta_{(y:x)}(t))\cdotp \theta_{(z:y): (z:x)}(\theta_{z: y}(u))).\nonumber
\end{align}}
Indeed, 
{\small\begin{eqnarray*}
& &\theta_{(x\cdotp y)\cdotp (x\cdotp z)}(\theta_{x\cdotp y}(s)\cdotp \theta_{y: x}(t))\cdotp \theta_{(x\cdotp z):(x\cdotp y)}(\theta_{x\cdotp z}(s)\cdotp \theta_{z: x}(u)  \\
&=& (\theta_{(x\cdotp y)\cdotp (x\cdotp z)}(\theta_{x\cdotp y}(s))\cdotp \theta_{(x\cdotp y)\cdotp (x\cdotp z)}(\theta_{y: x}(t)))\cdotp (\theta_{(x\cdotp z):(x\cdotp y)}(\theta_{x\cdotp z}(s))\cdotp \theta_{(x\cdotp z):(x\cdotp y)}(\theta_{z: x}(u))) \nonumber \\
&=& (\theta_{(x\cdotp y)\cdotp (x\cdotp z)}(\theta_{x\cdotp y}(s))\cdotp \theta_{(x\cdotp y)\cdotp (x\cdotp z)}(\theta_{y: x}(t)))\cdotp (\theta_{(x\cdotp y)\cdotp (x\cdotp z)}(\theta_{x\cdotp y}(s))\cdotp \theta_{(x\cdotp z):(x\cdotp y)}(\theta_{z: x}(u))) \nonumber \\
&=& (\theta_{(x\cdotp y)\cdotp (x\cdotp z)}(\theta_{y: x}(t)): \theta_{(x\cdotp y)\cdotp (x\cdotp z)}(\theta_{x\cdotp y}(s)))\cdotp (\theta_{(x\cdotp y)\cdotp (x\cdotp z)}(\theta_{y: x}(t))\cdotp \theta_{(x\cdotp z):(x\cdotp y)}(\theta_{z: x}(u))) \nonumber \\
&=& (\theta_{(y:x)\cdotp (y\cdotp z)}(\theta_{y: x}(t)): \theta_{(x\cdotp y)\cdotp (x\cdotp z)}(\theta_{x\cdotp y}(s)))\cdotp (\theta_{(y:x)\cdotp (y\cdotp z)}(\theta_{y: x}(t))\cdotp \theta_{(z:x)\cdotp(z:y)}(\theta_{z: x}(u))) \nonumber \\
&=& (\theta_{(y:x)\cdotp (y\cdotp z)}(\theta_{y: x}(t)): \theta_{(x\cdotp y)\cdotp (x\cdotp z)}(\theta_{x\cdotp y}(s)))\cdotp (\theta_{(y:x)\cdotp (y\cdotp z)}(\theta_{y: x}(t))\cdotp \theta_{(z:y):(z:x)}(\theta_{z: y}(u))),\nonumber
\end{eqnarray*}}
i.e., \eqref{eq:1} holds. On the other hand, we have that 
{\small\begin{eqnarray}
& &((y,t): (x,s))\cdotp ((y,t)\cdotp (z,u))\nonumber   \\
&=& (y:x,\theta_{y:x}(t): \theta_{x\cdotp y}(s))\cdotp (y\cdotp z,\theta_{y\cdotp z}(t)\cdotp \theta_{z: y}(u)) \nonumber \\
&=& ((y:x)\cdotp (y\cdotp z),\theta_{(y:x)\cdotp (y\cdotp z)}(\theta_{y:x}(t): \theta_{x\cdotp y}(s)))\cdotp \theta_{(y\cdotp z): (y:x)}(\theta_{y\cdotp z}(t)\cdotp \theta_{z: y}(u)) \nonumber 
\end{eqnarray}}
and, by the hypothesis, it follows that
{\small\begin{eqnarray*}
& &\theta_{(y:x)\cdotp (y\cdotp z)}(\theta_{y:x}(t): \theta_{x\cdotp y}(s)))\cdotp \theta_{(y\cdotp z): (y:x)}(\theta_{y\cdotp z}(t)\cdotp \theta_{z: y}(u)  \\
&=& (\theta_{(y:x)\cdotp (y\cdotp z)}(\theta_{y:x}(t)):\theta_{(y:x)\cdotp (y\cdotp z)}(\theta_{x\cdotp y}(s)))\cdotp (\theta_{(y\cdotp z): (y:x)}(\theta_{y\cdotp z}(t))\cdotp \theta_{(y\cdotp z): (y:x)}(\theta_{z: y}(u)) ) \nonumber \\
&=&(\theta_{(y:x)\cdotp (y\cdotp z)}(\theta_{y:x}(t)):\theta_{(x\cdotp y)\cdotp (x\cdotp z)}(\theta_{x\cdotp y}(s)))\cdotp (\theta_{(y:x)\cdotp (y\cdotp z)}(\theta_{(y:x)}(t))\cdotp \theta_{(z:y): (z:x)}(\theta_{z: y}(u))), \nonumber 
\end{eqnarray*}}
i.e., \eqref{eq:2} holds.
Therefore, since we showed that the first members of \eqref{eq:1} and \eqref{eq:2} coincide
and $X$ is a q-cycle set, we have that equality \eqref{ug1} holds. In a similar way, one can check that \eqref{ug2} and \eqref{ug3} hold, hence it remains to show that $\sigma_{(x,s)}$ is bijective for every $(x,s)\in X\times S$. Suppose that $(x,s)\cdotp(y,t)=(x,s)\cdotp(z,u)$ for some $(x,s),(y,t),(z,u)\in X\times S$. Then,
$$(x\cdotp y,\theta_{x\cdotp y}(s)\cdotp \theta_{y: x}(t))=(x\cdotp z,\theta_{x\cdotp z}(s)\cdotp \theta_{z: x}(u)) $$
and, since $X$ and $S$ are q-cycle sets, it follows that $y=z$ and $\theta_{y:x}(t)=\theta_{z: x}(u)$. Therefore, from the bijectivity of $\theta_{y:x}$, we have that $t=u$, hence $\sigma_{(x,s)}$ is injective. Finally, we have that $(x,s)\cdotp (\sigma_x^{-1}(y),\theta^{-1}_{\sigma_x^{-1}(y):x}(\sigma^{-1}_{\theta_{y}(s)}(t)))=(y,t)$, hence the thesis.
\end{proof}

By semidirect product of q-cycle sets we are able to construct further dynamical extensions.
\begin{ex}
Let $X:=\{0,1,2\}$ be the q-cycle set given by $x\cdotp y:=y$, for all $x,y\in X$, and 
$$
x:0=x:1:=0\qquad\qquad  
x:2=2,
$$
for every $x\in X$. Moreover, let $Y:=\{0,1,2\}$ be the q-cycle set given by 
\begin{align*}
x\cdotp y=x:y=y,
\end{align*}
for all $x,y\in Y$, and $\theta:X\longrightarrow \Aut(Y)$ given by $\theta(0)=\theta(1):=(0\quad 1)$ and $\theta(2):=id_Y$. Then, the semidirect product $(X\times Y,\cdotp,:)$ is a q-cycle set of order $9$. In particular, note that it is not regular.
\end{ex}

Note that, if $X$ and $S$ are cycle sets, the semidirect product (as q-cycle sets) of $X$ and $S$ coincides with the semidirect product of cycle sets developed by Rump in \cite{rump2008semidirect}. 
Moreover, since biquandles in \cite{bardakov2019general} can be viewed as particular q-cycle sets, referring to Problem 4.15 in \cite{bardakov2019general} itself, the previous proposition gives rise to a general definition of semidirect product of biquandles.

\bibliographystyle{elsart-num-sort}
\bibliography{Bibliography}

\def\cprime{$'$}
\begin{thebibliography}{10}
\expandafter\ifx\csname url\endcsname\relax
  \def\url#1{\texttt{#1}}\fi
\expandafter\ifx\csname urlprefix\endcsname\relax\def\urlprefix{URL }\fi

\bibitem{andruskiewitsch2003racks}
N.~Andruskiewitsch, M.~Gra\~{n}a, From racks to pointed {Hopf} algebras, Adv.
  Math. 178~(2) (2003) 177--243.
\newline\urlprefix\url{https://doi.org/10.1142/S1005386716000183}

\bibitem{Ba18}
D.~Bachiller, Solutions of the {Y}ang-{B}axter equation associated to skew left
  braces, with applications to racks, J. Knot Theory Ramifications 27~(8)
  (2018) 1850055, 36.
\newline\urlprefix\url{https://doi.org/10.1142/S0218216518500554}

\bibitem{bachiller2015family}
D.~Bachiller, F.~{Ced\'o}, E.~Jespers, J.~{Okni\'nski}, A family of
  irretractable square-free solutions of the {Yang-Baxter} equation, Forum
  Math. 29~(6) (2017) 1291--1306.
\newline\urlprefix\url{https://doi.org/10.1515/forum-2015-0240}

\bibitem{bardakov2019general}
V.~Bardakov, T.~Nasybullov, M.~Singh, General constructions of biquandles and
  their symmetries, Preprint.
\newline\urlprefix\url{https://arxiv.org/abs/1908.08301}

\bibitem{bon2019}
M.~Bonatto, M.~Kinyon, D.~Stanovsk\'y, P.~Vojtechovsk\'y, Involutive latin
  solutions of the {Yang-Baxter} equation, Preprint.
\newline\urlprefix\url{https://arxiv.org/pdf/1910.02148.pdf}

\bibitem{cacsp2018quasi}
M.~Castelli, F.~Catino, M.~M. Miccoli, G.~Pinto, Dynamical extensions of
  quasi-linear left cycle sets and the {Yang-Baxter} equation, J. Alg. Appl.
  18~(11) (2019) 1950220.
\newline\urlprefix\url{https://doi.org/10.1142/S0219498819502207}

\bibitem{cacsp2017}
M.~Castelli, F.~Catino, G.~Pinto, A new family of set-theoretic solutions of
  the {Yang-Baxter} equation, Comm. Algebra 46~(4) (2017) 1622--1629.
\newline\urlprefix\url{http://dx.doi.org/10.1080/00927872.2017.1350700}

\bibitem{cacsp2019}
M.~Castelli, F.~Catino, G.~Pinto, About a question of {Gateva-Ivanova and
  Cameron} on square-free set-theoretic solutions of the {Yang-Baxter}
  equation, Comm. Algebra, In press.
\newline\urlprefix\url{https://doi.org/10.1080/00927872.2020.1713328}

\bibitem{cacsp2018}
M.~Castelli, F.~Catino, G.~Pinto, Indecomposable involutive set-theoretic
  solutions of the {Yang-Baxter} equation, J. Pure Appl. Algebra 220~(10)
  (2019) 4477--4493.
\newline\urlprefix\url{https://doi.org/10.1016/j.jpaa.2019.01.017}

\bibitem{capiru2020}
M.~Castelli, G.~Pinto, W.~Rump, On the indecomposable involutive set-theoretic
  solutions of the {Yang-Baxter} equation of prime-power size, Comm. Algebra,
  In press.
\newline\urlprefix\url{https://doi.org/10.1080/00927872.2019.1710163}

\bibitem{cacs1}
F.~Catino, I.~Colazzo, P.~Stefanelli, On regular subgroups of the affine group,
  Bull. Aust. Math. Soc. 91~(1) (2015) 76--85.
\newline\urlprefix\url{http://dx.doi.org/10.1017/S000497271400077X}

\bibitem{cacs3}
F.~Catino, I.~Colazzo, P.~Stefanelli, Semi-braces and the {Y}ang-{B}axter
  equation, J. Algebra 483 (2017) 163--187.
\newline\urlprefix\url{https://doi.org/10.1016/j.jalgebra.2017.03.035}

\bibitem{cacs4}
F.~Catino, I.~Colazzo, P.~Stefanelli, Skew left braces with non-trivial
  annihilator, J. Algebra Appl. 18~(2) (2019) 1950033, 23.
\newline\urlprefix\url{https://doi.org/10.1142/S0219498819500336}

\bibitem{CaMaSt19x}
F.~Catino, M.~Mazzotta, P.~Stefanelli, Set-theoretical solutions of the
  {Y}ang-{B}axter and pentagon equations on semigroups, Accepted on Semigroup
  Forum.
\newline\urlprefix\url{https://arxiv.org/abs/1910.05393}

\bibitem{cedo2010involutive}
F.~Ced{\'o}, E.~Jespers, {\'A}.~Del~R{\'\i}o, Involutive {Yang-Baxter} groups,
  Transactions of the American Mathematical Society 362~(5) (2010) 2541--2558.
\newline\urlprefix\url{https://doi.org/10.1090/S0002-9947-09-04927-7}

\bibitem{cedo2014braces}
F.~Ced{\'o}, E.~Jespers, J.~Okni{\'n}ski, Braces and the {Yang-Baxter}
  equation, Comm. Math. Phys. 327~(1) (2014) 101--116.
\newline\urlprefix\url{https://doi.org/10.1007/s00220-014-1935-y}

\bibitem{CeJeOk19x}
F.~Ced\'o, E.~Jespers, J.~{Okni\'nski}, Set-theoretic solutions of the
  {Y}ang-{B}axter equation, associated quadratic algebras and the minimality
  condition, Rev. Mat. Complut. (2020).
\newline\urlprefix\url{https://doi.org/10.1007/s13163-019-00347-6}

\bibitem{cedo2019structure}
F.~Ced\'o, E.~Jespers, C.~Verwimp, Structure monoids of set-theoretic solutions
  of the {Y}ang-{B}axter equation, Preprint.
\newline\urlprefix\url{https://arxiv.org/abs/1912.09710}

\bibitem{CeSmVe19}
F.~Ced\'{o}, A.~Smoktunowicz, L.~Vendramin, Skew left braces of nilpotent type,
  Proc. Lond. Math. Soc. (3) 118~(6) (2019) 1367--1392.
\newline\urlprefix\url{https://doi.org/10.1112/plms.12209}

\bibitem{dehornoy2015set}
P.~Dehornoy, Set-theoretic solutions of the {Yang-Baxter} equation,
  {RC-calculus}, and {Garside} germs, Adv. Math. 282 (2015) 93--127.
\newline\urlprefix\url{https://doi.org/10.1142/S0219498815500012}

\bibitem{drinfeld1992some}
V.~G. Drinfel\cprime~d, On some unsolved problems in quantum group theory, in:
  Quantum groups ({L}eningrad, 1990), vol. 1510 of Lecture Notes in Math.,
  Springer, Berlin, 1992, pp. 1--8.
\newline\urlprefix\url{https://doi.org/10.1007/BFb0101175}

\bibitem{etingof1998set}
P.~Etingof, T.~Schedler, A.~Soloviev, Set-theoretical solutions to the {Quantum
  Yang-Baxter} equation, Duke Math. J. 100~(2) (1999) 169--209.
\newline\urlprefix\url{http://doi.org/10.1215/S0012-7094-99-10007-X}

\bibitem{Ga18}
T.~Gateva-Ivanova, Set-theoretic solutions of the {Y}ang-{B}axter equation,
  braces and symmetric groups, Adv. Math. 338 (2018) 649--701.
\newline\urlprefix\url{https://doi.org/10.1016/j.aim.2018.09.005}

\bibitem{gateva2008matched}
T.~Gateva-Ivanova, S.~Majid, Matched pairs approach to set theoretic solutions
  of the {Yang--Baxter} equation, J. Algebra 319~(4) (2008) 1462--1529.
\newline\urlprefix\url{https://doi.org/10.1016/j.jalgebra.2007.10.035}

\bibitem{gateva1998semigroups}
T.~Gateva-Ivanova, M.~Van~den Bergh, Semigroups of {I-Type}, J. Algebra 206~(1)
  (1998) 97--112.
\newline\urlprefix\url{https://doi.org/10.1006/jabr.1997.7399}

\bibitem{guarnieri2017skew}
L.~Guarnieri, L.~Vendramin, Skew braces and the {Y}ang-{B}axter equation, Math.
  Comp. 86~(307) (2017) 2519--2534.
\newline\urlprefix\url{https://doi.org/10.1090/mcom/3161}

\bibitem{horvat2018constructing}
E.~Horvat, Constructing biquandles, Preprint.
\newline\urlprefix\url{https://arxiv.org/abs/1810.03027}

\bibitem{JeP18}
P.~Jedlicka, A.~Pilitowska, A.~Zamojska-Dzienio, The retraction relation for
  biracks, J. Pure Appl. Algebra 223~(8) (2019) 3594--3610.
\newline\urlprefix\url{https://doi.org/10.1016/j.jpaa.2018.11.020}

\bibitem{JeKuVaVe19}
E.~Jespers, L.~Kubat, A.~Van~Antwerpen, L.~Vendramin, Factorizations of skew
  braces, Math. Ann. 375~(3-4) (2019) 1649--1663.
\newline\urlprefix\url{https://doi.org/10.1007/s00208-019-01909-1}

\bibitem{JeOk05}
E.~Jespers, J.~Okni\'{n}ski, Monoids and groups of {$I$}-type, Algebr.
  Represent. Theory 8~(5) (2005) 709--729.
\newline\urlprefix\url{https://doi.org/10.1007/s10468-005-0342-7}

\bibitem{JeVa19}
E.~Jespers, A.~Van~Antwerpen, Left semi-braces and solutions of the
  {Y}ang-{B}axter equation, Forum Math. 31~(1) (2019) 241--263.
\newline\urlprefix\url{https://doi.org/10.1515/forum-2018-0059}

\bibitem{lebed2017homology}
V.~Lebed, L.~Vendramin, Homology of left non-degenerate set-theoretic solutions
  to the {Yang-Baxter} equation, Adv. Math. 304 (2017) 1219--1261.
\newline\urlprefix\url{https://doi.org/10.1142/S0218196716500570}

\bibitem{LeVe19}
V.~Lebed, L.~Vendramin, On structure groups of set-theoretic solutions to the
  {Y}ang-{B}axter equation, Proc. Edinb. Math. Soc. (2) 62~(3) (2019) 683--717.
\newline\urlprefix\url{https://doi.org/10.1017/s0013091518000548}

\bibitem{LuYZ00}
J.-H. Lu, M.~Yan, Y.-C. Zhu, On the set-theoretical {Y}ang-{B}axter equation,
  Duke Math. J. 104~(1) (2000) 1--18.
\newline\urlprefix\url{http://dx.doi.org/10.1215/S0012-7094-00-10411-5}

\bibitem{nelson2013birack}
S.~Nelson, E.~Watterberg, Birack dynamical cocycles and homomorphism
  invariants, J. Algebra Appl. 12~(8) (2013) 1350049, 14.
\newline\urlprefix\url{https://doi.org/10.1142/S0219498813500497}

\bibitem{rump2005decomposition}
W.~Rump, A decomposition theorem for square-free unitary solutions of the
  quantum {Yang-Baxter} equation, Adv. Math. 193 (2005) 40--55.
\newline\urlprefix\url{https://doi.org/10.1016/j.aim.2004.03.019}

\bibitem{rump2007braces}
W.~Rump, Braces, radical rings, and the quantum {Y}ang-{B}axter equation, J.
  Algebra 307~(1) (2007) 153--170.
\newline\urlprefix\url{https://doi.org/10.1016/j.jalgebra.2006.03.040}

\bibitem{rump2008semidirect}
W.~Rump, Semidirect products in algebraic logic and solutions of the quantum
  {Y}ang-{B}axter equation, J. Algebra Appl. 7~(4) (2008) 471--490.
\newline\urlprefix\url{https://doi.org/10.1142/S0219498808002904}

\bibitem{rump2019covering}
W.~Rump, A covering theory for non-involutive set-theoretic solutions to the
  {Y}ang-{B}axter equation, J. Algebra 520 (2019) 136--170.
\newline\urlprefix\url{https://doi.org/10.1016/j.jalgebra.2018.11.007}

\bibitem{smock2018skew}
A.~Smoktunowicz, L.~Vendramin, On skew braces (with an appendix by {N. Byott
  and L. Vendramin}), J. Comb. Algebra 2~(1) (2018) 47--86.
\newline\urlprefix\url{https://doi.org/10.4171/JCA/2-1-3}

\bibitem{So00}
A.~Soloviev, Non-unitary set-theoretical solutions to the quantum
  {Y}ang-{B}axter equation, Math. Res. Lett. 7~(5-6) (2000) 577--596.
\newline\urlprefix\url{https://doi.org/10.4310/MRL.2000.v7.n5.a4}

\bibitem{vendramin2016extensions}
L.~Vendramin, Extensions of set-theoretic solutions of the {Yang-Baxter}
  equation and a conjecture of {Gateva-Ivanova}, J. Pure Appl. Algebra 220
  (2016) 2064--2076.
\newline\urlprefix\url{https://doi.org/10.1142/S1005386716000183}

\end{thebibliography}

\end{document}